\newtheorem{prelem}{{\bf Theorem}}
\newenvironment{theoremABC}{\begin{prelem}{\hspace{-0.5
               em}{\bf.}}}{\end{prelem}}
\newtheorem{theorem}{Theorem}[section]
\newtheorem{corollary}[theorem]{Corollary}
\newtheorem{definition}[theorem]{Definition}
\newtheorem{conjecture}[theorem]{Conjecture}
\newtheorem{claim}[theorem]{Claim}
\newtheorem{lemma}[theorem]{Lemma}
\newtheorem{observation}[theorem]{Observation}
\newenvironment{proof}{{\bf Proof.}}{\hfill\rule{2mm}{2mm}}
\newtheorem{remarka}[theorem]{Remark}
\newenvironment{remark}{\begin{remarka}\rm}{\hfill\rule{2mm}{2mm}\end{remarka}}
\def\deg {{\rm deg}}
\def\tr {{\rm tr}}
\title{Graph norms and Sidorenko's conjecture}
\author{
{\bf  Hamed Hatami} \\
{\small\it Department of Computer Science}\\
{\small University of Toronto} \\
{\small e-mail: hamed@cs.toronto.edu}}
\begin{document}
\maketitle

\tableofcontents

\begin{abstract}
Let $H$ and $G$ be two finite graphs. Define $h_H(G)$ to be the
number of homomorphisms from $H$ to $G$. The function $h_H(\cdot)$
extends in a natural way to a function from the set of symmetric
matrices to $\mathbb{R}$ such that for $A_G$, the adjacency matrix
of a graph $G$, we have $h_H(A_G)=h_H(G)$. Let $m$ be the number of
edges of $H$. It is easy to see that when $H$ is the cycle of length
$2n$, then $h_H(\cdot)^{1/m}$ is the $2n$-th Schatten-von Neumann
norm. We investigate a question of Lov\'{a}sz that asks for a
characterization of graphs $H$ for which the function
$h_H(\cdot)^{1/m}$ is a norm.

We prove that $h_H(\cdot)^{1/m}$ is a norm if and only if a
H\"{o}lder type inequality holds for $H$. We use this inequality to
prove both positive and negative results, showing that
$h_H(\cdot)^{1/m}$ is a norm for certain classes of graphs, and
giving some necessary conditions on the structure of $H$ when
$h_H(\cdot)^{1/m}$ is a norm. As an application we use the
inequality to verify a conjecture of Sidorenko for certain graphs
including hypercubes. In fact for such graphs we can prove
statements that are much stronger than the assertion of Sidorenko's
conjecture.

We also investigate the $h_H(\cdot)^{1/m}$ norms from a Banach space
theoretic point of view, determining their moduli of smoothness and
convexity. This generalizes the previously known result for the
$2n$-th Schatten-von Neumann norms.
\end{abstract}

%%%%%%%%%%%%%%%%%%%%%%%%%%%%%%%%%%%%%%%%%%%%%%%%%%%%%%%%%%%%%%%%%%%%%
\section{Introduction}

Let $H$ and $G$ be graphs. A {\sf homomorphism} from $H$ to $G$ is a
mapping $h:V(H) \rightarrow V(G)$ such that for each edge $\{u,v\}$
of $H$, $\{h(u),h(v)\}$ is an edge of $G$. Let $h_H(G)$ denote the
number of homomorphisms from $H$ to $G$. If $w$ is the adjacency
matrix of the graph $G$, then
\begin{equation}
\label{eq:tHgraph} h_H(G)=\sum_{x_a \in V(G)\ \forall a \in V(H)}
\left( \prod_{\{u,v\} \in E(H)} w(x_u,x_v) \right).
\end{equation}

We might also divide $h_H(G)$ by the total number of mappings from
$V(H)$ to $V(G)$ to obtain a normalized version:
\begin{equation}
\label{eq:tFunction} t_H(G) := \frac{h_H(G)}{|V(G)|^{|V(H)|}}.
\end{equation}
Thus $t_H(G)$ is the probability that a random mapping from $V(H)$
to $V(G)$ is a homomorphism.

The expression in the right hand side of (\ref{eq:tHgraph}) is quite
common. Such sums appear as {\sf Mayer} sums in classical
statistical mechanics, {\sf Feynman} sums in quantum field
theory~\cite{MR0239836} and {\sf multicenter} sums in quantum
chemistry~\cite{MR0128419}. In the present article when we study
$h_H(G)$, we usually think of $H$ as a fixed graph. In this case, as
it has been stated formally in Lemma 2.1 in~\cite{LovaszSzegedy06},
when $G$ is a sufficiently dense graph, $h_H(G)$ is a good
approximation for the number of copies of $H$ in $G$. This makes
understanding the behavior of $h_H$ one of the main objectives of
the extremal graph theory. Despite all the machinery that is
developed in recent years~\cite{MR2257396, MR2249277} and has been
applied successfully to some important questions \cite{Raz06}, still
there are many questions regarding these functions that are remained
unsolved. One of the important open questions in this area is  the
celebrated conjecture of Sidorenko~\cite{MR1138091}. The conjecture
says that for every graph $G$, and every \emph{bipartite} graph $H$
with  $m$ edges, we have
$$t_H(G) \ge t_{K_2}(G)^m,$$
where $K_2$ is the graph comprising two vertices and a single edge
between them. While the original motivation of this work was not to
study this conjecture, during the research we realized that our
results verify the conjecture for certain graphs including the
hypercubes. In fact for such graphs we can prove statements that are
surprisingly stronger than the assertion of Sidorenko's conjecture.
We discuss this more extensively in Section~\ref{sec:Sidorenko}.

Let us explain our main motivation. First we need to define
$h_H(\cdot)$ on a more general domain than graphs.

\begin{definition}
\label{def:WSymSpace} Consider an index set ${\cal I}$. Let ${\cal
WS}({\cal I})$ be the set of the symmetric real matrices indexed
over ${\cal I}$, i.e.
$${\cal WS}({\cal I})=\{w : {\cal I} \times {\cal I} \rightarrow \mathbb{R}:
\mbox{$w$ is symmetric} \}.$$
For a graph $H$ and $w \in {\cal WS}({\cal I})$, define
\begin{equation}
\label{eq:tHw} h_{H}(w):=\sum_{x_a \in {\cal I}\ \forall a \in V(H)}
\left(\prod_{\{u,v\} \in E(H)}w(x_u,x_v)\right).
\end{equation}
\end{definition}

 It turns out that for $C_4$, the cycle of size $4$, the function
$h_{C_4}(G)$ carries interesting information about $G$. For example
if $t_{C_4}(G)^{1/4}$ is close to $t_{K_2}(G)$, then $G$ ``looks
random'' in certain aspects~\cite{MR1054011}. Such graphs are
usually referred to as quasi-random graphs. Another interesting fact
about $h_{C_4}$ is that $h_{C_4}(\cdot)^{1/4}$ is a norm on ${\cal
WS}({\cal I})$. These observations belong to the same circle of
ideas  employed by Sz\'{e}meredi~\cite{MR0245555,MR0369312} to prove
his famous theorem on arithmetic progressions. In fact
Sz\'{e}meredi's regularity lemma, the main tool in the proof of his
theorem, roughly speaking says that every graph can be decomposed
into a few number of subgraphs such that most of them are
quasi-random (we refer the reader to Tao's survey~\cite{TaoMontreal}
for a precise formulation of the regularity lemma in terms of the
$h_{C_4}(\cdot)^{1/4}$ norm). Recently
Gowers~\cite{Gowers98,MR1844079} defined a hypergraph version of
this norm, and subsequently he~\cite{GowReg} and Nagle, R\"{o}dl,
Schacht, and Skokan~\cite{MR2198495, MR2198496, MR2069663}
independently established a hypergraph regularity lemma which easily
implies Szemer\'{e}di's theorem in its full generality, and even
stronger theorems such as Furstenberg-Katznelson's multi-dimensional
arithmetic progression theorem~\cite{MR1159329,MR1884430}, a result
that the only known proof for it at the time was through ergodic
theory~\cite{MR531279}. In fact arithmetic version of the Gowers
norm has interesting interpretations in ergodic theory, and has been
studied from that aspect~\cite{HostKra}. The discovery of this norm
led to a better understanding of the concept of quasi-randomness,
and provided strong tools. For example this norm plays an essential
role in the Green and Tao's proof~\cite{GreenTao} that the primes
contain arbitrarily long arithmetic progressions and the current
best bounds for the quantified version of the Szemer\'{e}di's
theorem is through the so called ``inverse theorems'' for these
norms~\cite{GreenTaoInverse,GreenTaoBoundI,GreenTaoBoundII,MR1844079}.

With all the known applications for the Gowers norms, it seems
natural to believe that studying $h_H(\cdot)^{1/|E(H)|}$ for graphs
other than $C_4$ might as well lead to some interesting
applications. In fact the main goal of this article is to pursue a
question of Lov\'{a}sz which asks for a characterization of graphs
$H$ for which the function $h_H(\cdot)^{1/|E(H)|}$ is a norm. We
prove both positive and negative results, showing that
$h_H(\cdot)^{1/|E(H)|}$ is a norm for certain classes of graphs, and
giving some necessary conditions on the structure of $H$ when
$h_H(\cdot)^{1/|E(H)|}$ is a norm. We hope that the application to
Sidorenko's conjecture promises discoveries of more applications in
the future.

We shall see in Section~\ref{sec:Schatten} that for $n>1$,
$h_{C_{2n}}(\cdot)^{1/2n}$ is the $2n$-th Schatten-von Neumann norm.
Probably after the $\ell_p$ spaces and the Banach lattices, the
Schatten-von Neumann spaces are the most well-studied normed spaces.
Therefore it seems natural to study the $h_H(\cdot)^{1/|E(H)|}$
norms from a Banach space theoretic aspect too. In this direction we
determine the moduli of convexity and smoothness of these spaces,
the two dual parameters that play a fundamental role in Banach space
theory. We discuss this further in Section~\ref{sec:Banach}.

\section{Definitions and main results}
\subsection{Notations and definitions \label{sec:notations}}
For $n \in \mathbb{N}$, let $[n]:=\{1,\ldots,n\}$. For two functions
$f,g: \mathbb{R} \rightarrow \mathbb{R}^+$, we write $f=o(g)$, if
and only if
$$\lim_{x \rightarrow \infty} f(x)/g(x)=0.$$
A {\sf graph} $G$ is a pair $(V,E)$ where $V$ is a finite set and
$E$ is a \emph{multi-set} (i.e. multiple copies of an element are
allowed) of the {\sf edges}, where every edge is an element of the
form $\{u,v\}$ with $u,v$ distinct elements in $E$. So we allow our
graphs to have multiple edges but no self-loops.

For a graph $G$, and an integer $k>0$, we denote by $G^{\between
k}$, the graph that is obtained by replacing every edge of $G$ by
$k$ multiple edges.

For a graph $G=(V,E)$, a set $S \subseteq V$ is called an {\sf
independent set} if there is no edge with both endpoints in $S$. For
the reasons that will be apparent soon we are mainly concerned about
the bipartite graphs. In graph theory, $G=(V,E)$ is called a
bipartite graph if $V$ can be partitioned into two disjoint
independent sets $V_1$ and $V_2$. We call the partition of $V$ into
$(V_1,V_2)$ a {\sf bipartization} of $G$. Note that disconnected
bipartite graphs have more than one bipartization. In this article
we use a different definition that fixes one specific bipartization
for $G$. So by a {\sf bipartite} graph we mean a triple $G=(X,Y;E)$,
where $X$ and $Y$ are two disjoint sets and $E$ is a multi-set of
the elements of $X \times Y$. Note that here we fix the
bipartization $(X,Y)$ as a part of the definition. Also note that
contrary to our definition of graphs, here every edge is an ordered
pair, and can be thought of as a directed edge from $X$ to $Y$.

Let $K_{m,n}$ be the complete bipartite graph, i.e. $K_{m,n}=(X,Y,X
\times Y)$ where $|X|=m$ and $|Y|=n$, and note that with our
definition unless $m=n$, $K_{m,n}$ is different from $K_{n,m}$. The
$n$-dimensional {\sf hypercube} $Q_n$ is the bipartite graph
$(X,Y;E)$ where $X$ is the set of elements of $\{0,1\}^n$ with an
even number of $1$'s in their coordinates, and $Y=\{0,1\}^n
\setminus X$. Moreover $(x,y) \in E$ if and only if $y$ differs only
in one coordinate from $x$.

For a bipartite graph $G$, the graph $G^{\between k}$ is defined in
a similar way to the general graphs. We also define a product for
bipartite graphs:

\begin{definition}
\label{def:biproduct} Let $G=(X,Y;E)$ and $H=(X',Y';E')$ be two
bipartite graphs. Then define $G \times^b H$, the bi-product of $G$
and $H$, as the bipartite graph with bipartization $\left(X \times
X', Y \times Y'\right)$ where the multiplicity of $((x,x'),(y,y'))$
in $G \times^b H$ is equal to the product of the multiplicities of
$(x,y) \in E$ and $(x',y') \in E'$.
\end{definition}

By a {\sf normed space} we mean a pair $(V,\|\cdot\|)$, where $V$ is
a vector space over $\mathbb{R}$ and $\|\cdot\|$ is a function from
$V$ to nonnegative reals satisfying

\begin{itemize}
\item[{\bf (i):}] $\|x\|=0$ if and only if $x=0$.

\item[{\bf (ii):}] $\|\lambda x\|=|\lambda| \|x\|$ for all $x \in V$ and $\lambda \in \mathbb{R}$.

\item[{\bf (iii):}] $\|x+y\|\le \|x\| + \|y\|$ for all $x,y \in V$.
\end{itemize}

We call $\|x\|$ the norm of $x$. A semi-norm is a function similar
to a norm except that it might not satisfy {\bf (i)}. A quasi-norm
is similar to a norm in that it satisfies the norm axioms, except
that {\bf (iii)} is replaced by $\|x+y\|\le K(\|x\|+\|y\|)$ for some
universal constant $K>0$.

\subsection{Graph norms}

As we discussed in the introduction $h_{C_4}(\cdot)^{1/4}$ is a
norm. Our main goal is to investigate a question of Lov\'{a}sz that
asks for a characterization of graphs $H$ for which the function
$h_H(\cdot)^{1/m}$ is a norm. Let $H$ be a nonbipartite graph and
$w_1=\left[\begin{array}{cc} 1 & 0\\ 0 & 1
\end{array} \right]$ and $w_2=\left[\begin{array}{cc} 0 & 1\\ 1 & 0
\end{array} \right]$. Then since $H$ is not bipartite we have
$h_H(w_2)=0$ and  we get $h_H(w_1)^{1/|E(H)|}+h_H(w_2)^{1/|E(H)|}
<h_H(w_1+w_2)^{1/|E(H)|}$. This shows that for our purposes it is
sufficient to restrict to the case where $H$ is bipartite. In this
case we can use a more general setting than ${\cal WS}$ and remove
the condition that $w$ is symmetric.

\begin{definition}
\label{def:WSpace} Consider two index sets ${\cal I}$ and ${\cal
J}$. Let
$${\cal W}({\cal I} \times {\cal J})=\{w : {\cal I} \times {\cal J} \rightarrow \mathbb{R}\},$$
and
$${\cal W}^+({\cal I} \times {\cal J})=\{w : {\cal I} \times {\cal J} \rightarrow \mathbb{R}^+\}.$$
For a bipartite graph $H=(X,Y;E)$ and $w \in {\cal W}({\cal I}
\times {\cal J})$, define
\begin{equation}\label{eq:asym} h_{H}(w):=\sum_{x_a \in {\cal I} \ \forall a \in X}
\ \sum_{y_b \in {\cal J} \ \forall b \in Y} \left(\prod_{(u,v) \in
E}w(x_u,y_v)\right);
\end{equation}
\begin{equation}
\|w\|_H:=|h_{H}(w)|^{1/|E|};
\end{equation}
\begin{equation}
\|w\|_{r(H)}:=h_{H}(|w|)^{1/|E|}.
\end{equation}
Furthermore let ${\cal W}_{H}({\cal I} \times {\cal J})$ and ${\cal
W}_{r(H)}({\cal I} \times {\cal J})$ respectively denote the set of
all $w \in {\cal W}$ with $\|w\|_H <\infty$ and
$\|w\|_{r(H)}<\infty$.
\end{definition}
\begin{remark}
Note that every bipartite graph $G=(X',Y';E')$ can be represented by
a zero-one matrix $w$ whose rows and columns are  indexed
respectively by elements of $X'$ and $Y'$, and every entry is equal
to one, if and only if its corresponding row and column are adjacent
in $G$. Then $h_H(w)$, as defined in (\ref{eq:asym}), is the number
of homomorphisms from $H$ to $G$ so that  $X$ is mapped into $X'$,
and $Y$ is mapped into  $Y'$.
\end{remark}

\begin{remark}
\label{rem:notation} From now on, when there is no ambiguity we drop
the variables from the subscript of sums. For example with this
notation, we allow $h_H(w)=\sum \prod_{(u,v) \in E(H)}w(x_u,y_v)$ or
even $h_H(w)=\sum \prod_{(u,v) \in E(H)}w$. We might also write
${\cal W}_H$ instead of ${\cal W}_H({\cal I} \times {\cal J})$.

Note that every $w \in {\cal W}_{H}({\cal I} \times {\cal J})$ can
be thought of as a matrix whose rows are indexed over ${\cal I}$ and
whose columns are indexed over ${\cal J}$. Let $w \in {\cal
W}_{H}({\cal I} \times {\cal J})$ and $w' \in {\cal W}_{H}({\cal J}
\times {\cal K})$. Then the matrix multiplication of $w$ to $w'$ is
defined. In order to distinguish between the matrix multiplication
and the pointwise multiplication we denote the former by $w \circ
w'$, and the latter by $ww'$. Moreover if $w \in {\cal W}({\cal I}
\times {\cal I})$, then $w^{\circ n}:=w \circ \ldots \circ w$, where
$w$ appears $n$ times in the right-hand side.
\end{remark}

We shall see below that neither $\|\cdot\|_H$ nor $\|\cdot\|_{r(H)}$
is always a norm. We have the following  observations:
\begin{observation}
\label{lem:observations} Let $H$ be a graph. Then
\begin{itemize}
\item[{\bf (i):}] If the function $\|\cdot\|_H$ is a semi-norm on ${\cal W}_{H}$, then
$\|\cdot\|_{r(H)}$ is a norm on ${\cal W}_{r(H)}$.

\item[{\bf (ii):}] If $H$ has a vertex of odd degree, then $\|\cdot\|_H$ is not always a
norm.
\end{itemize}
\end{observation}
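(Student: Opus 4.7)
For part (i), I plan to verify the three norm axioms for $\|\cdot\|_{r(H)}$ directly, invoking the semi-norm hypothesis on $\|\cdot\|_H$ only to obtain the triangle inequality. Absolute homogeneity is immediate from pulling $|\lambda|$ out of each of the $|E|$ edge factors, giving $h_H(|\lambda w|) = |\lambda|^{|E|} h_H(|w|)$. For positive definiteness, if $w(i_0,j_0)\neq 0$ I would specialize every $x_a$ to $i_0$ and every $y_b$ to $j_0$; the resulting summand contributes $|w(i_0,j_0)|^{|E|}>0$ to $h_H(|w|)$, and every other summand is nonnegative, so $\|w\|_{r(H)}>0$. For the triangle inequality the plan is to chain two inequalities: the pointwise bound $|u+v|\le|u|+|v|$ combined with monotonicity of $h_H$ on nonnegative inputs yields $h_H(|u+v|)\le h_H(|u|+|v|)$; and since on nonnegative matrices $\|\cdot\|_H$ and $\|\cdot\|_{r(H)}$ coincide, the semi-norm inequality $\||u|+|v|\|_H\le\||u|\|_H+\||v|\|_H$ translates directly into $\|u+v\|_{r(H)}\le\|u\|_{r(H)}+\|v\|_{r(H)}$.

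For part (ii), the plan is to exhibit a nonzero $w$ with $h_H(w)=0$, thereby violating positivity. By symmetry I may assume the odd-degree vertex lies in $X$. On ${\cal I}={\cal J}=\{+,-\}$ I would try the rank-one choice $w(i,j)=\chi(i)\phi(j)$, where $\chi,\phi$ both send $+$ to $+1$ and $-$ to $-1$. The key observation is that with $w$ of this tensor form, each edge $(u,v)$ contributes $\chi(x_u)\phi(y_v)$, and collecting edges by their endpoints gives
\[
h_H(w)=\Bigl(\prod_{u\in X}\sum_{x_u\in{\cal I}}\chi(x_u)^{\deg(u)}\Bigr)\Bigl(\prod_{v\in Y}\sum_{y_v\in{\cal J}}\phi(y_v)^{\deg(v)}\Bigr).
\]
Each factor equals $1+(-1)^{\deg(u)}$, which vanishes whenever $\deg(u)$ is odd. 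Since $H$ is assumed to have such a vertex, the whole product is zero, while $w(+,+)=1$ ensures $w\neq 0$, so $\|\cdot\|_H$ cannot be a norm.

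I do not expect a serious obstacle in either half. The only real idea is the rank-one construction in (ii): recognizing that $w=\chi\phi^{T}$ forces $h_H(w)$ to split as a product indexed by vertices of $H$ reduces the existence of a nontrivial zero to the odd-degree hypothesis on a \emph{single} vertex. In (i) the one thing to be careful about is that the semi-norm hypothesis is applied only to the nonnegative matrices $|u|$, $|v|$, and $|u|+|v|$, where $\|\cdot\|_H$ and $\|\cdot\|_{r(H)}$ agree by definition, so no delicate issue about the finiteness of the norms arises.
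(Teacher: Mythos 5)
Your proposal is correct and takes essentially the same route as the paper: your rank-one choice $w(i,j)=\chi(i)\phi(j)$ on a two-element index set is precisely the paper's counterexample $\left[\begin{smallmatrix}1 & -1\\ -1 & 1\end{smallmatrix}\right]$, and your vertex-by-vertex factorization of $h_H(w)$ is just the (omitted) verification that an odd-degree vertex forces $\|w\|_H=0$. Part (i) is dismissed as trivial in the paper, and your direct check via monotonicity of $h_H$ on nonnegative kernels and the coincidence of $\|\cdot\|_H$ with $\|\cdot\|_{r(H)}$ on nonnegative matrices is the intended routine argument.
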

\begin{proof}
Part {\bf (i)} is trivial. To prove {\bf (ii)}, note that
$\|w\|_H=0$ for
$$w=\left[\begin{array}{cc}1 & -1 \\
-1 & 1 \end{array}\right].$$
\end{proof}

Before continuing the discussion, let us first give a brief review
on Schatten-von Neumann norms, and see why $\|\cdot\|_{C_4}$ is a
special case of those norms.

\subsection{Schatten-von Neumann classes \label{sec:Schatten}}
Let $A$ be a real matrix. The $p$-th Schatten norm of $A$ is defined
as
$$\|A\|_{S_p} = (\tr(A^tA)^{p/2})^{1/p}.$$
Note that when $A$ is an $n \times n$ matrix, $\|A\|_{S_p}$ is just
the $\ell_p$ norm that is applied to the eigenvalues of $|A|=(A^t
A)^{1/2}$. This fact generalizes by the spectral theorem to the
infinite case. It is well-known (but not trivial) that
$\|\cdot\|_{S_p}$ is a norm. This can be deduced from
Theorem~\ref{thm:Trace} below due to Schatten and von
Neumann~\cite{MR0015674,MR0016533,MR0027127}. As it is mentioned in
Remark~\ref{rem:notation} we can consider the elements of ${\cal
W}({\cal I} \times {\cal J})$ as matrices. We state the theorem of
Schatten and von Neumann in this notation.
\begin{theoremABC}
\label{thm:Trace} Suppose that $1 \le p,q,r < \infty$ are such that
$\frac{1}{p}+\frac{1}{q}=\frac{1}{r}$. Let $v \in {\cal W}({\cal
I}_1, {\cal I}_2)$ and $w \in {\cal W}({\cal I}_2, {\cal I}_3)$.
Then
$$\|v \circ w\|_{S_r} \le \|v\|_{S_p} \|w\|_{S_q}.$$
\end{theoremABC}

Consider $C_{2n}$, a cycle of even length and a $w \in {\cal
W}({\cal I} \times {\cal J})$. Note that

\begin{eqnarray*}
\|w\|_{S_{2n}}^{2n}&=&\tr(w^t \circ w)^{\circ n}= \sum_{i \in {\cal
I}} \left((w^t \circ w)^{\circ n}\right)_{ii} \\ &=&
\sum_{i_0,i_2,\ldots, i_{2n-2} \in {\cal I}} \sum_{i_1,i_3,\ldots,
i_{2n-1} \in {\cal J}} w^t(i_0,i_1)
w(i_1,i_2) \ldots w^t(i_{2n-2},i_{2n-1}) w(i_{2n-1},i_0) \\
&=& \sum_{i_0,i_2,\ldots, i_{2n-2} \in {\cal I}}
\sum_{i_1,i_3,\ldots, i_{2n-1} \in {\cal J}} w(i_1,i_0) w(i_1,i_2)
\ldots w(i_{2n-1},i_{2n-2}) w(i_{2n-1},i_0) \\
&=& \|w\|_{C_{2n}}^{2n}.
\end{eqnarray*}
For further reading about the Schatten-von Neumann norms we refer
the reader to~\cite{MR1342297}.

\subsection{H\"{o}lder and weakly H\"{o}lder graphs \label{subsec:norm}}

The following is a corollary of Theorem~\ref{thm:Trace}.
\begin{corollary}
\label{cor:cycle} Let $k \ge 1$ be an integer, $V(C_{2k})=X \cup Y$
be the bipartization of $C_{2k}$, and $w_e \in {\cal W}({\cal I}
\times {\cal J})$ for every $e \in E(C_{2k})$. Then

$$\sum_{x_u \in {\cal I} \ \forall u \in X} \sum_{y_v \in {\cal J} \ \forall v \in Y}
\prod_{e=(u,v) \in E(C_{2k})} w_e(x_u,y_v) \le \prod_{e \in
E(C_{2k})} \|w_e\|_{C_{2k}}.$$
\end{corollary}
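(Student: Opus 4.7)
The plan is to reduce the desired inequality to an iterated application of the Schatten--von Neumann H\"older inequality (Theorem~A). The key observation is that the left-hand side can be recast as a trace of a product of $2k$ matrices, each built from one of the edge weights $w_e$.

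First I would label the vertices of $C_{2k}$ cyclically as $u_1, v_1, u_2, v_2, \ldots, u_k, v_k$ with $u_i \in X$ and $v_i \in Y$, so the edge set consists of the two families $(u_i, v_i)$ and $(u_{i+1}, v_i)$, where indices are taken mod $k$. Define $A_i, B_i \in \mathcal{W}(\mathcal{I} \times \mathcal{J})$ by $A_i = w_{(u_i, v_i)}$ and $B_i = w_{(u_{i+1}, v_i)}$, and view each as a matrix with rows indexed by $\mathcal{I}$ and columns by $\mathcal{J}$. Since
$$(A_i \circ B_i^t)(x, x') = \sum_{y \in \mathcal{J}} A_i(x, y)\, B_i(x', y),$$
summing over the $y_v$'s first and then over the $x_u$'s expresses the left-hand side of the corollary as
$$\tr\bigl( A_1 \circ B_1^t \circ A_2 \circ B_2^t \circ \cdots \circ A_k \circ B_k^t \bigr).$$

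Next I would use $|\tr(M)| \le \|M\|_{S_1}$ together with Theorem~A applied $2k-1$ times, each time with exponent parameter $2k$ (since $2k$ copies of $1/(2k)$ sum to $1$), to obtain
$$\bigl| \tr( A_1 \circ B_1^t \circ \cdots \circ A_k \circ B_k^t) \bigr| \le \prod_{i=1}^{k} \|A_i\|_{S_{2k}}\, \|B_i^t\|_{S_{2k}}.$$
Finally I would invoke the identity $\|w\|_{S_{2k}} = \|w\|_{C_{2k}}$ derived in Section~\ref{sec:Schatten}, along with the transpose-invariance $\|B_i^t\|_{S_{2k}} = \|B_i\|_{S_{2k}}$ (a direct consequence of the definition of Schatten norms), which rewrites the product as $\prod_{e \in E(C_{2k})} \|w_e\|_{C_{2k}}$ and finishes the argument.

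The steps are essentially bookkeeping; the only point that requires care is making sure the edge orientations around the cycle alternate correctly so that the sum really does telescope into a trace of $A_i \circ B_i^t$ factors rather than some other product (which would not be well-defined since all matrices have the same index set structure $\mathcal{I} \times \mathcal{J}$). Once this is set up, the heavy lifting is done by Theorem~A and the Schatten--cycle norm identity, both of which are already available.
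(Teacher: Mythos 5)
Your proposal is correct and follows essentially the same route as the paper: both recast the cyclic sum as the trace of an alternating product of the $w_e$'s and their transposes (you group around the $Y$-vertices via $A_i\circ B_i^t$, the paper around the $X$-vertices), bound the trace by the $S_1$ norm, iterate Theorem~A with exponents $2k$, and finish with $\|w^t\|_{S_{2k}}=\|w\|_{S_{2k}}$ and the identity $\|\cdot\|_{S_{2k}}=\|\cdot\|_{C_{2k}}$. No gaps.
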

\begin{proof}
Let us identify $C_{2k}=(\{2i+1: 0 \le i \le k-1\},\{2i: 0 \le i \le
k-1\};E)$, where $$E=\{(1,0), (3,2),\ldots,(2n-1,2n-2)\} \cup
\{(1,2), (3,4),\ldots,(2n-1,0)\}.$$ We have
\begin{eqnarray*}
\sum \prod_{e=(u,v) \in E} w_e(x_u,y_v)  &=& \tr\left(w_{(1,0)}^t
\circ w_{(1,2)} \ldots \circ w_{(2k-1,2k-2)}^t \circ
w_{(2k-1,0)}\right) \\ &\le& \left\|w_{(1,0)}^t \circ w_{(1,2)}
\ldots \circ w_{(2k-1,2k-2)}^t \circ w_{(2k-1,0)}\right\|_{S_1}.
\end{eqnarray*}
Since $\frac{1}{2k}+\ldots+\frac{1}{2k}=\frac{1}{1}$, by repeatedly
applying Theorem~\ref{thm:Trace}, and noting that always
$\|w\|_{S_p}=\|w^t\|_{S_p}$, we get
$$\left\|w_{(1,0)}^t \circ w_{(1,2)} \ldots
\circ w_{(2k-1,2k-2)}^t \circ w_{(2k-1,0)}\right\|_{S_1} \le
\prod_{e \in E} \|w_e\|_{S_{2k}} = \prod_{e \in E}
\|w_e\|_{C_{2k}}.$$
\end{proof}

Corollary~\ref{cor:cycle} inspires us to have the following
definition.
\begin{definition}
\label{def:Holder} A bipartite graph $H$  is called
\begin{itemize}
\item {\bf H\"{o}lder:} If for every choice of
$\{w_e \in {\cal W}({\cal I} \times {\cal J}): e \in E(H)\}$ we have
\begin{equation}
\label{eq:Holder} \sum \prod_{e \in E(H)} w_e \le \prod_{e \in E(H)}
\|w_e\|_{H}.
\end{equation}

\item {\bf Weakly H\"{o}lder:} If for every choice of
$\{w_e \in {\cal W}({\cal I} \times {\cal J}): e \in E(H)\}$ we have
\begin{equation}
\label{eq:wHolder} \sum \prod_{e \in E(H)} w_e  \le \prod_{e \in
E(H)} \|w_e\|_{r(H)}.
\end{equation}
\end{itemize}
\end{definition}
Note that H\"{o}lder implies weakly H\"{o}lder, and by
Corollary~\ref{cor:cycle} cycles of even length are H\"{o}lder. We
prove the following theorem.

\begin{theorem}
\label{thm:Holder} A graph $H$ is H\"{o}lder if and only if
$\|\cdot\|_H$ is always a semi-norm. A graph $H$ is weakly
H\"{o}lder if and only if $\|\cdot\|_{r(H)}$ is always a norm.
\end{theorem}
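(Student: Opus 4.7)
\textbf{Forward direction} (H\"older $\Rightarrow$ $\|\cdot\|_H$ semi-norm). Homogeneity and non-negativity are immediate, so the content is the triangle inequality. Writing $m=|E(H)|$ and expanding
\[
 h_H(w+w') \;=\; \sum_{S \subseteq E(H)} T_S, \qquad T_S := \sum \prod_{e\in S} w \prod_{e \notin S} w',
\]
I would apply H\"older separately to each $T_S$ using the assignment $w_e=w$ on $S$ and $w_e=w'$ off $S$. Note that H\"older's inequality for the assignment $\{\epsilon_e w_e\}$ with any signs $\epsilon_e \in \{\pm 1\}$ gives the absolute-value form $|\sum \prod w_e| \le \prod \|w_e\|_H$, since the left side just picks up a global $\prod \epsilon_e$ while the right side is invariant. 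Hence $|T_S| \le \|w\|_H^{|S|}\|w'\|_H^{m-|S|}$, and summing via the binomial theorem yields $|h_H(w+w')| \le (\|w\|_H + \|w'\|_H)^m$; taking $m$-th roots gives the triangle inequality.

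\textbf{Reverse direction} ($\|\cdot\|_H$ semi-norm $\Rightarrow$ H\"older). Let $G(w_1,\ldots,w_m):=\sum \prod_e w_e$ be the underlying multilinear form, so $h_H(w)=G(w,\ldots,w)$. The plan has two stages. First, establish a \emph{crude} estimate $|G(w_1,\ldots,w_m)| \le C_m \prod_e \|w_e\|_H$ with some constant $C_m$ depending only on $m$. This I would get from the polarization identity
\[
 m!\,\tilde G(w_1,\ldots,w_m) \;=\; \sum_{S \subseteq [m]} (-1)^{m-|S|}\, h_H\!\Bigl(\sum_{i\in S} w_i\Bigr),
\]
where $\tilde G$ is the symmetrization of $G$ over the $m!$ permutations of its arguments, together with the triangle-inequality bound $|h_H(\sum_S w_i)| = \|\sum_S w_i\|_H^m \le (\sum_S \|w_i\|_H)^m$ and the AM--GM-type scaling $w_i \mapsto \lambda_i w_i$ to turn a sum into a product of norms. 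This gives $|\tilde G(w_1,\ldots,w_m)| \le C_m \prod \|w_i\|_H$ for an explicit $C_m$ (say $C_m = 2^m m^m / m!$), but only for the symmetrized form.

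The second stage \emph{self-improves} and removes the symmetrization via tensor products. Define $(w \otimes u)((x,x'),(y,y')) := w(x,y)u(x',y')$ on the enlarged index sets; by separating the double summation over coordinates one checks both $G(w_1\otimes u_1,\ldots,w_m \otimes u_m) = G(w_1,\ldots,w_m)\,G(u_1,\ldots,u_m)$ and $\|w\otimes u\|_H = \|w\|_H\|u\|_H$. Applying the Stage-1 bound to $(w_1^{\otimes k},\ldots,w_m^{\otimes k})$ and using multiplicativity,
\[
 \tilde G(w_1^{\otimes k},\ldots,w_m^{\otimes k}) \;=\; \frac{1}{m!} \sum_{\sigma} G(w_{\sigma(1)},\ldots,w_{\sigma(m)})^k.
\]
For \emph{even} $k$ this is a sum of nonnegative terms, hence bounded below by $\max_\sigma |G(w_{\sigma(1)},\ldots,w_{\sigma(m)})|^k / m!$; combined with the upper bound $C_m \prod \|w_i\|_H^k$ this yields $\max_\sigma |G(w_{\sigma(1)},\ldots,w_{\sigma(m)})| \le (m!\,C_m)^{1/k}\prod_e \|w_e\|_H$, and sending $k\to\infty$ through even integers kills the constant and delivers the sharp H\"older inequality for every edge-assignment (in particular the identity permutation). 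The weakly H\"older $\Leftrightarrow$ $\|\cdot\|_{r(H)}$-norm equivalence is proved by the same scheme applied to $|w_e|$ in place of $w_e$, together with monotonicity of $h_H$ on nonnegative matrices ($|w+w'|\le |w|+|w'|$ pointwise) in the forward direction; positive-definiteness of $\|\cdot\|_{r(H)}$ is automatic since $h_H(|w|)=0$ already forces $w=0$ whenever $H$ has an edge (evaluate on the constant assignment concentrated at any nonzero entry of $w$).

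The principal obstacle is precisely the gap between what polarization controls ($\tilde G$, which averages over all $m!$ edge-relabelings) and what H\"older demands (the unsymmetrized $G$ for each labeling). The tensor-power device in Stage~2 is what bridges this gap: raising to an even power converts the symmetric average into a sum of nonnegative quantities dominated by the largest term, essentially the $\ell_p \to \ell_\infty$ trick as $p\to\infty$. This is also why the argument is insensitive to the crude constant~$C_m$.
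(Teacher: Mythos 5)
Your proof is correct, but the hard direction takes a genuinely different route from the paper's. The forward direction (H\"older $\Rightarrow$ semi-norm) coincides with the paper's: expand $h_H(w_1+w_2)$ and apply (\ref{eq:Holder}) termwise; your sign-flip observation giving the absolute-value form of (\ref{eq:Holder}) is in fact slightly more careful than the paper, which silently bounds $|h_H(w_1+w_2)|$. For the converse the paper argues by contrapositive rather than directly: given a violating assignment with $\sum\prod_e w_e=c>\prod_e\|w_e\|_H$, normalized so $\|w_e\|_H\le 1$, it considers the single matrix $\sum_{e}w_e^{\otimes 2n}$ and, using multiplicativity under tensoring (Lemma~\ref{lem:tensor}), expands $\bigl\|\sum_e w_e^{\otimes 2n}\bigr\|_H^m=\sum_{f:E\to E}\bigl(\sum\prod_e w_{f(e)}\bigr)^{2n}\ge c^{2n}$, while each $\|w_e^{\otimes 2n}\|_H\le 1$; so the triangle inequality fails on the blown-up index set for large $n$. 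Your direct argument replaces this with polarization (crude bound $|\tilde G|\le C_m\prod\|w_i\|_H$ on the symmetrized form) followed by the same tensor-power trick with even exponent to dominate the identity term and send the constant to $1$. Both proofs rest on the same two ingredients --- multiplicativity of the form under $\otimes$ and evenness to make cross terms nonnegative --- and both genuinely use the ``always'' quantifier, since the semi-norm hypothesis is invoked on the enlarged index sets. What the paper's version buys: no polarization identity is needed, and because the violation grows like $c^{2n/m}$ it immediately yields Corollary~\ref{cor:quasi} (quasi-norm implies norm); your scheme is equally insensitive to constants, so it would give that corollary too, and in addition it outputs the stronger absolute-value form of H\"older for every permutation of the edge-assignment. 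Two small loose ends on your side, neither fatal: the constant $C_m=2^mm^m/m!$ comes from normalizing $\|w_i\|_H=1$, so the degenerate case $\|w_i\|_H=0$ should be disposed of separately (your own scaling $w_i\mapsto\lambda w_i$, $\lambda\to\infty$, shows $\tilde G$ vanishes there); and, like the paper, you manipulate possibly infinite sums (polarization, expansion) without addressing summability, which is the accepted level of rigor here. The $r(H)$ statement is treated the same way in both proofs (the paper simply says the weakly H\"older case is similar), and your reduction to nonnegative $w_e$ plus the pointwise positivity argument for definiteness of $\|\cdot\|_{r(H)}$ is sound.
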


Let us state our positive results.

\begin{theorem}
\label{thm:Hypercubes} We have the following:
\begin{itemize}
\item[{\bf (i):}] If $G$ and $H$ are both H\"{o}lder (weakly H\"{o}lder) then so
is $G \times^b H$.

\item[{\bf (ii):}] For every $m,n \ge 1$, the graph $K_{m,n}$ is weakly H\"{o}lder.
If both $m$ and $n$ are even then $K_{m,n}$ is H\"{o}lder.

\item[{\bf (iii):}] The hypercubes $Q_n$ are weakly H\"{o}lder.

\item[{\bf (iv):}] If $H$ is weakly H\"{o}lder, then
$H^{\between 2k}$ is H\"{o}lder for every integer $k>0$.
\end{itemize}
\end{theorem}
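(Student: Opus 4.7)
For (i), the plan is a nested application of H\"{o}lder's inequality, first for $G$ and then for $H$, to weights obtained by aggregating across the other factor. Label the summation variables for $G \times^b H$ as $\tilde x_a^\alpha \in {\cal I}$ for $(a, \alpha) \in X \times X'$ and $\tilde y_b^\beta \in {\cal J}$ for $(b, \beta) \in Y \times Y'$, and group them into aggregated vectors $\tilde x_a := (\tilde x_a^\alpha)_{\alpha \in X'} \in {\cal I}^{X'}$ and $\tilde y_b \in {\cal J}^{Y'}$. Defining $v_{e_G}(\tilde x_a, \tilde y_b) := \prod_{e_H = (\alpha, \beta) \in E(H)} w_{(e_G, e_H)}(\tilde x_a^\alpha, \tilde y_b^\beta)$, the total sum $h_{G \times^b H}(\{w_{(e_G, e_H)}\})$ becomes $\sum \prod_{e_G \in E(G)} v_{e_G}$ in the aggregated variables, and the H\"{o}lder hypothesis for $G$ bounds it by $\prod_{e_G} \|v_{e_G}\|_G$. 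For each $e_G$, reorganize $h_G(v_{e_G})$ as $\sum \prod_{e_H} u_{e_H}^{(e_G)}$ with weights $u_{e_H}^{(e_G)}(\xi, \eta) := \prod_{(a, b) \in E(G)} w_{(e_G, e_H)}(\xi_a, \eta_b)$ on ${\cal I}^X \times {\cal J}^Y$, apply H\"{o}lder for $H$, and note that a direct expansion identifies $h_H(u_{e_H}^{(e_G)})$ with $h_{G \times^b H}(w_{(e_G, e_H)})$; the bound $\prod_{(e_G, e_H)} \|w_{(e_G, e_H)}\|_{G \times^b H}$ then assembles. The weakly H\"{o}lder case is identical with $\|\cdot\|_{r(\cdot)}$ in place of $\|\cdot\|$.

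For (ii), the key observation is the bi-product decomposition $K_{m, n} = K_{1, n} \times^b K_{m, 1}$ (matching vertex sets and edge multiplicities directly from Definition~2.3), so by (i) it suffices to prove the claim for the stars $K_{1, n}$ and $K_{m, 1}$. For the star $K_{1, n}$, set $\tilde W_i(x) := \sum_y |w_i(x, y)|$; then classical H\"{o}lder on sequences with $n$ equal exponents gives $\sum_x \prod_i \tilde W_i(x) \le \prod_i (\sum_x \tilde W_i^n)^{1/n} = \prod_i \|w_i\|_{r(K_{1, n})}$, establishing weakly H\"{o}lder for every $n$. When $n$ is even, the identity $\|w\|_{K_{1, n}}^n = \sum_x W(x)^n = \sum_x |W(x)|^n$ (with $W(x) := \sum_y w(x, y)$) lets one bypass the absolute values and obtain the full H\"{o}lder property.

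For (iii), I would argue by induction on $n$, with base case $Q_1 = K_{1, 1}$ trivial. For the inductive step, partition $E(Q_n)$ into two disjoint copies of $Q_{n-1}$ on the slices $x_n = 0$ and $x_n = 1$, plus the direction-$n$ perfect matching $M$, and factor $\sum_\phi \prod_e w_e$ over the restrictions $(\phi^0, \phi^1)$ with $M$ supplying the coupling $\prod_x w_{e_x}(\phi^0(x, 0), \phi^1(x, 1))$. Applying the Cauchy-Schwarz inequality to decouple the two slices, and then invoking the inductive weakly H\"{o}lder hypothesis on each resulting $Q_{n-1}$-type sum (with weights suitably symmetrized to absorb the matching contributions), yields the bound. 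This step is the main technical obstacle: the Cauchy-Schwarz doubling produces a ``replica'' structure whose identification as a $Q_{n-1}$-expression requires careful tracking of multiplicities and of how the matching weights get folded into slice weights.

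For (iv), consolidate the $2k$ copies over each $H$-edge into a product weight $W_e := \prod_{j=1}^{2k} w_{e, j}$, so that $\sum_\phi \prod_{e, j} w_{e, j} = \sum_\phi \prod_e W_e$; by weakly H\"{o}lder for $H$ applied to $\{W_e\}$, this is $\le \prod_e \|W_e\|_{r(H)}$. For each $e$, bound $\|W_e\|_{r(H)}^{|E(H)|} = \sum_\phi \prod_j f_{e, j}(\phi)$, with $f_{e, j}(\phi) := \prod_{e' \in E(H)} |w_{e, j}(\phi(e'))|$, by classical H\"{o}lder applied to the $2k$ nonnegative functions $f_{e, j}$ with equal exponents $2k$; the evenness of $2k$ is what ensures $\sum_\phi f_{e, j}^{2k} = \sum_\phi \prod_{e'} w_{e, j}^{2k} = h_{H^{\between 2k}}(w_{e, j}) = \|w_{e, j}\|_{H^{\between 2k}}^{2k|E(H)|}$ (the quantity being automatically nonnegative). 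Combining, $\|W_e\|_{r(H)} \le \prod_j \|w_{e, j}\|_{H^{\between 2k}}$, and hence $\sum \prod w_{e, j} \le \prod_{e, j} \|w_{e, j}\|_{H^{\between 2k}}$, establishing H\"{o}lder for $H^{\between 2k}$.
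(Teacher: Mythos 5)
Parts (i), (ii) and (iv) of your proposal are correct. For (i) and (ii) you follow essentially the paper's own route: the nested application of the H\"{o}lder property over aggregated index sets, and the reduction of $K_{m,n}$ to the stars via $K_{m,n}=K_{1,n}\times^b K_{m,1}$ with the classical H\"{o}lder inequality handling the stars (evenness of $n$ giving $h_{K_{1,n}}(w)=\sum_x W(x)^n\ge 0$, so absolute values can be dropped). Your (iv) is a mild variant: the paper deduces it from the identity $H^{\between 2k}=H\times^b K_2^{\between 2k}$ together with part (i) and the observation $\|\cdot\|_{H^{\between 2k}}=\|\cdot\|_{r(H^{\between 2k})}$, whereas you fold the $2k$ parallel copies into the pointwise product $W_e$, apply weak H\"{o}lder for $H$, and then classical H\"{o}lder with exponent $2k$ inside each $h_H(|W_e|)$; this is correct and amounts to the same computation carried out by hand instead of through the bi-product machinery.

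Part (iii) is where the real content of the theorem lies, and there your outline has a genuine gap, which you yourself flag as ``the main technical obstacle.'' Two concrete problems. First, after slicing $Q_n$ into two copies of $Q_{n-1}$ plus the direction-$n$ matching and decoupling the slices (whether by Cauchy--Schwarz or by fusing each pair $0v,1v$ into a single vertex with values in ${\cal I}\times{\cal J}$), the matching weights survive as functions of a single fused vertex value, i.e.\ as \emph{vertex} weights; the plain weakly H\"{o}lder statement for $Q_{n-1}$ has no slot for these, and folding them into adjacent edge weights destroys the identification of the resulting factors with $\|\cdot\|_{Q_n}$-type quantities. This is exactly why the paper proves the stronger Claim~\ref{claim:hypercubes}, which carries vertex functions $f_u,g_v$ through the induction (with the base case $Q_2=C_4$ handled via the Schatten--von Neumann inequality). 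Second, even granting that strengthened inductive hypothesis, a single application of it produces a bound of the form $\prod_{e\in E(Q_{n-1})}\bigl(\sum R_{\psi_e}\bigr)^{1/|E(Q_{n-1})|}$, in which slice edges and matching edges of $Q_n$ play asymmetric roles and the exponents are $1/|E(Q_{n-1})|$ rather than the required $1/|E(Q_n)|$; the matching weights never appear as full $\|w_e\|_{Q_n}$ factors at this stage. The paper's resolution is the elaborate Steps 2--4: a second reduction of $Q_n$ to $Q_2$, iterating the composition of the two reductions $k$ times, using the collapsing identities and the edge-transitivity of $Q_n$ to show that all but an exponentially small fraction of the composed maps degenerate to the maps $\phi_e$, averaging over permutations of the edges, and letting $k\to\infty$ (plus a truncation and dominated-convergence step for unbounded weights). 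Your phrase ``with weights suitably symmetrized to absorb the matching contributions'' is precisely the step that requires all of this, so as written part (iii) is not proved.
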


We also prove the following necessary conditions for a graph to be
weakly H\"{o}lder.

\begin{theorem}
\label{thm:criterion} If $G$ is weakly H\"{o}lder, then
\begin{itemize}
\item[{\bf (i):}] For every subgraph $H \subseteq G$, we have
$\frac{|E(H)|}{|V(H)|-1} \le \frac{|E(G)|}{|V(G)|-1}$.

\item[{\bf (ii):}] If $u$ and $v$ belong to the same part in the bipartization of $G$,
then $\deg(u)=\deg(v)$.
\end{itemize}
\end{theorem}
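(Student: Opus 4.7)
The plan is to prove both parts by applying the weakly H\"older inequality (Definition 2.7), which is equivalent to $\|\cdot\|_{r(G)}$ being a norm by Theorem 2.9, to carefully constructed edge-weights $\{w_e\}_{e \in E(G)}$. In each case we pick weights depending on extra combinatorial data (whether $e$ lies in a subgraph $H$, or which endpoint of $e$ lies in a distinguished vertex $u^*$), then read off a structural inequality by comparing exponents of a scaling parameter.

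For part (i), I would take $w_e = I_N$ (the identity on $[N] \times [N]$, viewed as a matching) when $e \in E(H)$ and $w_e = J_N$ (all ones) when $e \notin E(H)$. On the left, $\sum \prod w_e$ forces $x_{u(e)} = y_{v(e)}$ for every $e \in E(H)$, which collapses each component of $H$ to one free $N$-valued parameter; vertices of $V(G) \setminus V(H)$ stay free. This gives $\sum \prod w_e = N^{c(H) + |V(G)| - |V(H)|}$. On the right, $\|I_N\|_{r(G)} = N^{c(G)/|E(G)|}$ (homomorphisms to a matching fix each $G$-component) and $\|J_N\|_{r(G)} = N^{|V(G)|/|E(G)|}$, so the RHS simplifies to $N^{|V(G)| - |E(H)|(|V(G)|-c(G))/|E(G)|}$. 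Letting $N \to \infty$ in the resulting inequality yields $|E(H)|/(|V(H)|-c(H)) \le |E(G)|/(|V(G)|-c(G))$, which specializes to (i) when $G$ (hence $c(G) = 1$) is connected; the disconnected-$H$ case follows since $c(H) \ge 1$ only strengthens the left-hand denominator.

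For part (ii), assume toward contradiction that the $X$-degrees are not all equal, pick $u^* \in X$ with minimum degree $d^* = \min_{u \in X} d_u$ (so $|E(G)| > |X| d^*$), and take rank-one weights $w_e = a_{u(e)} \otimes \mathbf{1}_{[n]}$ with $a_{u^*} = \mathbf{1}_{[n]}$ and $a_u = e_1$ for $u \ne u^*$. The multilinear sum factors: $\sum \prod w_e = \prod_{u \in X} \|a_u\|_{d_u}^{d_u} \prod_{v \in Y} \|\mathbf{1}\|_{d_v}^{d_v} = n \cdot n^{|Y|}$. On the right, each edge incident to $u^*$ contributes $\|w_e\|_{r(G)} = n^{|V(G)|/|E(G)|}$ while the remaining $|E(G)| - d^*$ edges each contribute $n^{|Y|/|E(G)|}$, so $\prod_e \|w_e\|_{r(G)} = n^{|Y| + |X|d^*/|E(G)|}$. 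The weakly H\"older inequality then demands $|E(G)| \le |X| d^*$, contradicting the assumption. Hence the $X$-degrees are constant; the $Y$-side statement is symmetric.

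The main technical obstacle is guessing the right test weights: simply testing $\|\cdot\|_{r(G)}$ on pairs $(w_1, w_2)$ of non-negative matrices produces only equality cases for small examples like the path of length $3$, so the violation of the norm axioms only manifests through the full multi-weight form of weakly H\"older. The key insight for both parts is that rank-one weights (with a clever choice of base vectors $\mathbf{1}$ vs.\ $e_1$, or indicator/identity vs.\ all-ones) allow $\sum \prod w_e$ to factor over vertices of $G$, making the exponents in $n$ (resp.\ $N$) computable and matching them to the degree sum $|E(G)|$ or the component count; once the expressions are aligned, the inequality of $N$-exponents is exactly the graph-theoretic inequality we want.
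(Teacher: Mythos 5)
Your proposal is correct, and it splits naturally: part (i) is essentially the paper's own argument, while part (ii) takes a genuinely different route. For (i) the paper also plugs a diagonal weight on the edges of $H$ and the all-ones weight elsewhere into the multi-weight inequality and compares growth rates in the size $k$ of the index set; your choice $w_e=I_N$ is just the paper's $\lambda$ taken to be the all-ones vector, and your bookkeeping with component counts $c(H),c(G)$ is in fact more careful than the paper's, which implicitly assumes $G$ (and $H$) connected when it writes $\|w\|_{r(G)}=\|\lambda\|_m$ -- note that the theorem's part (i) as literally stated needs connectivity of $G$ anyway (two disjoint edges are weakly H\"older but violate it), so your explicit reduction to the connected case is the right reading. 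For (ii) the paper proceeds quite differently: it uses the ``cross'' weight $w(x,y)=1$ iff $x=1$ or $y=1$, whose homomorphism count is governed by independent sets, first deduces via an asymptotic count that the complement of a maximum independent set is independent and that all vertices of $X$ have equal degree, and then runs a second computation with the indicator of the single entry $(1,1)$ to handle the degrees in $Y$. Your rank-one weights $a_{u(e)}\otimes\mathbf{1}$, keyed to the $X$-endpoint of each edge, make the left side factor over vertices and reduce the whole question to comparing the minimum $X$-degree with the average degree $|E(G)|/|X|$ in a single exponent count, with the $Y$-side obtained by symmetry; this is shorter, avoids the independent-set asymptotics and the intermediate structural detour (the paper's route does yield, as a by-product, that one side of the bipartition is the unique maximum independent set, but that is not needed for the statement). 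The only caveats in your write-up are degenerate ones that the paper also ignores (isolated vertices, where $\|e_1\|_{d_u}^{d_u}=1$ requires $d_u\ge 1$), and your closing remark that the failure ``only manifests through the full multi-weight form'' is not quite right as a general principle -- by the tensor-power argument of Theorem~\ref{thm:Holder} any multi-weight violation can be converted into a failure of the triangle inequality for two matrices -- but that is side commentary, not a gap in the proof.
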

\begin{remark}
Theorem~\ref{thm:criterion} implies that among trees only $K_{1,n}$
are weakly H\"{o}lder. As we shall see in
Section~\ref{sec:criterion}, the proof of
Theorem~\ref{thm:criterion} shows that if a graph $G$ fails to
satisfy at least one of Theorem~\ref{thm:criterion} {\bf (i)} or
{\bf (ii)}, then the triangle inequality fails even if we restrict
ourselves to the symmetric matrices.
\end{remark}

\subsection{Sidorenko's conjecture \label{sec:Sidorenko}}

It is more natural to state Sidorenko's conjecture in a continuous
setting.

\begin{definition}
Consider two probability spaces $(M,{\cal F},\mu)$ and $(M',{\cal
F}',\nu)$. Let
$${\cal W}(\mu \times \nu)=\{w : \mu \times \nu \rightarrow \mathbb{R}:
\mbox{$w$ is measurable} \},$$
and for a bipartite graph $H=(X,Y;E)$ and $w \in {\cal W}(\mu \times
\nu)$, define
\begin{eqnarray}& h_{H}(w):=\int \prod_{(u,v) \in E}w(x_u,y_v) \prod_{t
\in X} d\mu(x_t)\prod_{r \in Y} d\nu(y_r); & \\
&\|w\|_{H}:=|h_{H}(w)|^{1/|E|}; &\\
&\|w\|_{r(H)}:=|h_{H}(|w|)|^{1/|E|}.&
\end{eqnarray}
Furthermore let ${\cal W}_{H}(\mu \times \nu)$ and ${\cal
W}_{r(H)}(\mu \times \nu)$ respectively denote the set of all $w \in
{\cal W}$ with $\|w\|_H <\infty$ and $\|w\|_{r(H)}<\infty$.
\end{definition}

The ${\cal W}_H(\mu \times \nu)$ spaces are related to ${\cal
W}_H({\cal I} \times {\cal J})$ spaces in the same way that $L_p$
spaces are related to $\ell_p$ spaces. So as one might expect it is
easy to see that all the results that are mentioned in the previous
sections hold for this setting as well. Now in this setting
Sidorenko's conjecture says that for every bipartite graph $H$ with
$m$ edges and every $w \in {\cal W}(\mu \times \nu)$, we have
$$h_{H}(|w|) \ge h_{K_2}(|w|)^m.$$
This simple-to-state conjecture is verified only for a handful of
graphs~\cite{MR1225933} including trees, even cycles, and complete
bipartite graphs. To see the importance of the conjecture note that
the case where $H$ is a path is equivalent to the Blakley-Roy
inequality~\cite{MR0184950} which has originally been proved by
Blakley and Roy using spectral techniques.

Sidorenko's conjecture has an interesting meaning: Fix a constant $0
\le p \le 1$. For an integer $n>0$, let $G(n,p)$ be a random graph
on $n$ vertices where each edge is present independently with
probability $p$. Let $\mu$ be the uniform measure on the vertices of
$G(n,p)$, and $w \in {\cal W}(\mu \times \mu)$ be its adjacency
matrix. Note that with high probability $h_{K_2}(w)=p \pm o(1)$, and
$h_{H}(w)=p^m \pm o(1)$. So roughly speaking Sidorenko's conjecture
says that for every bipartite graph $H$, among all graphs with fixed
number of vertices and edges, the random graphs asymptotically
contain the least number of copies of $H$.

Bal{\'a}zs Szegedy [private communication] pointed out to the author
that if $\|\cdot\|_{r(H)}$ is a norm, then Sidorenko's conjecture
holds for $H$. This can be easily seen from the convexity of norms.
But now that we have Theorem~\ref{thm:Holder}, in fact we can say
much more. Note that Sidorenko's conjecture can be reformulated as
the following.

\begin{conjecture}[Sidorenko's Conjecture]
Let $\mu$ be a probability measure. For every bipartite graph $H$
and every $w \in {\cal W}({\cal \mu} \times {\cal \mu})$, we have
$$\|w\|_{r(H)} \ge \|w\|_{r(K_2)}.$$
\end{conjecture}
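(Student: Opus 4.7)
The plan is to symmetrize over measure-preserving transformations and then invoke convexity of the norm. First I would note that the conjecture is a sharpening of the familiar fact that a norm is minimized (on a level set of a linear functional) at a constant. When $\mu = \nu$ is a probability measure, $\|w\|_{r(K_2)} = \int |w|\, d\mu\, d\mu =: c$, while for the constant function $c$ one has $\|c\|_{r(H)} = c \cdot h_H(1)^{1/|E(H)|} = c$, using $h_H(1)=1$ for a probability measure. So the conjecture is equivalent to the statement that $\|f\|_{r(H)} \ge \|c\|_{r(H)}$ whenever $f \ge 0$ has mean $c$.

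For H\"older or weakly H\"older $H$, Theorem~\ref{thm:Holder} tells us that $\|\cdot\|_{r(H)}$ is an actual norm, and this is what I would exploit. Let $\Aut(\mu)$ denote the group of measure-preserving bijections of $(M,{\cal F},\mu)$ modulo null sets, equipped with an appropriate averaging procedure (e.g.\ normalized Haar measure when $\Aut(\mu)$ is compact, or a direct limit of finite symmetric groups for atomic spaces). For $(\sigma,\tau) \in \Aut(\mu) \times \Aut(\mu)$, the operator $T_{\sigma,\tau}f(x,y) := f(\sigma^{-1}x, \tau^{-1}y)$ is an isometry of $\|\cdot\|_{r(H)}$, because the integrals defining $h_H$ are invariant under separate measure-preserving reparameterization of the $X$- and $Y$-coordinates. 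Applying the triangle inequality in its integrated form to the average $\bar f := \E_{\sigma,\tau} T_{\sigma,\tau} f$ gives $\|\bar f\|_{r(H)} \le \E_{\sigma,\tau} \|T_{\sigma,\tau}f\|_{r(H)} = \|f\|_{r(H)}$.

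The argument then closes once one observes that the $\Aut(\mu)$-action is transitive enough to force $\bar f \equiv c$ almost everywhere: for each fixed $(x,y)$ in the support of $\mu\times\mu$, the pair $(\sigma^{-1}x, \tau^{-1}y)$ is itself $(\mu\times\mu)$-distributed, so $\bar f(x,y) = \int f\, d\mu\, d\mu = c$. Combining with $\|c\|_{r(H)} = c$, we obtain $\|f\|_{r(K_2)} = c \le \|f\|_{r(H)}$, which is exactly the conjecture for this $H$. I expect two obstacles. The smaller, technical one is justifying the continuous averaging and the interchange of integration with the norm in general probability spaces; this is handled by first reducing to standard atomless Borel spaces and approximating by finite symmetric group averages on atomic refinements. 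The larger and genuinely serious obstacle is that this approach produces Sidorenko's inequality only for the weakly H\"older class, and by Theorem~\ref{thm:criterion} many bipartite graphs, including all trees other than stars, fail this hypothesis; resolving the full conjecture would require genuinely different ideas.
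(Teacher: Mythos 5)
What you establish is not the conjecture itself --- which is open, and which the paper does not claim to prove either --- but the special case in which $\|\cdot\|_{r(H)}$ is a norm (the weakly H\"older graphs), and you say so explicitly; this matches the scope of what the paper actually proves. Within that scope your route is the symmetrization/convexity argument that the paper attributes to Szegedy (``this can be easily seen from the convexity of norms'') but never writes out. The paper's own partial result, Theorem~\ref{thm:strongSid}, instead applies the H\"older-type inequality of Theorem~\ref{thm:Holder} directly: take $w_e=w$ for the edges of a subgraph $G\subseteq H$ and $w_e=1$ for the remaining edges, so that $h_G(|w|)\le \|w\|_{r(H)}^{|E(G)|}\,\|1\|_{r(H)}^{|E(H)|-|E(G)|}=\|w\|_{r(H)}^{|E(G)|}$. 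That two-line argument buys three things your averaging does not: it needs no structure on the probability space (no standardness, no approximation by finite symmetric groups), it works for two different measures $\mu\neq\nu$, and it yields the stronger monotonicity $\|w\|_{r(G)}\le\|w\|_{r(H)}$ for every subgraph $G$, not just $G=K_2$.

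One step of your argument is also shakier than you indicate. Averaging over $\Aut(\mu)$ with ``normalized Haar measure'' is not available in general ($\Aut(\mu)$ of an atomless space is not compact), and the pointwise claim that $(\sigma^{-1}x,\tau^{-1}y)$ is $\mu\times\mu$-distributed for each fixed $(x,y)$ is false whenever $\mu$ has atoms of unequal mass: for a two-point space with masses $2/3$ and $1/3$ the only automorphism is the identity, so $\bar f=f$ and the symmetrization does nothing. The repair you sketch --- reduce to step functions constant on a product of equal-measure cells, average over the finite permutation group of the cells (where the average is exactly the constant $c$), then pass to the limit --- does work on standard spaces, but it requires an additional approximation/continuity statement for $\|\cdot\|_{r(H)}$ that the paper's choice-of-$w_e$ argument simply avoids. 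Your closing assessment is accurate: by Theorem~\ref{thm:criterion} the weakly H\"older hypothesis excludes most bipartite graphs (for instance all trees other than stars), so neither your argument nor the paper's resolves the conjecture in general.
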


We have the following result as a corollary to
Theorem~\ref{thm:Holder} which implies a stronger statement than of
Sidorenko's conjecture's when $\|\cdot\|_{r(H)}$ is a norm.

\begin{theorem}
\label{thm:strongSid} Let $\mu$ and $\nu$ be two probability
measures, and $H$ be a bipartite graph such that $\|\cdot\|_{r(H)}$
is a norm. Then for every subgraph $G \subseteq H$ and every $w \in
{\cal W}(\mu \times \nu)$ we have
$$\|w\|_{r(H)} \ge \|w\|_{r(G)}.$$
\end{theorem}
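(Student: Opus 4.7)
The plan is to invoke the (continuous version of the) weakly H\"{o}lder inequality that, by Theorem~\ref{thm:Holder}, is equivalent to $\|\cdot\|_{r(H)}$ being a norm, and plug in a carefully chosen collection of edge weights. Concretely, for $e \in E(G)$ put $w_e := |w|$, and for $e \in E(H) \setminus E(G)$ put $w_e$ equal to the constant function $\mathbf{1}$ on $\mu \times \nu$. The weakly H\"{o}lder inequality then gives
\begin{equation*}
\int \prod_{(u,v) \in E(H)} w_e(x_u,y_v) \, \prod_{t \in X} d\mu(x_t) \prod_{r \in Y} d\nu(y_r)
\;\le\; \prod_{e \in E(H)} \|w_e\|_{r(H)}.
\end{equation*}

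Next I would simplify both sides using that $\mu$ and $\nu$ are \emph{probability} measures. On the left, every vertex $v \in V(H) \setminus V(G)$ has only constant-$1$ factors incident to it, so its variable is integrated out against a probability measure and contributes $1$; what remains is precisely $h_G(|w|)$ (viewing $G$ with its own vertex set $V(G)$, since any isolated vertices inherited from $V(H)$ likewise integrate to $1$). On the right, $\|\mathbf{1}\|_{r(H)} = h_H(\mathbf{1})^{1/|E(H)|} = 1$ because the total mass of $\mu^{|X|} \times \nu^{|Y|}$ equals $1$, while $\||w|\|_{r(H)} = \|w\|_{r(H)}$. Hence the inequality collapses to
\begin{equation*}
h_G(|w|) \;\le\; \|w\|_{r(H)}^{|E(G)|},
\end{equation*}
and taking $|E(G)|$-th roots yields exactly $\|w\|_{r(G)} \le \|w\|_{r(H)}$.

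The argument is short, so there is no single dramatic obstacle; the only point that needs care is verifying that the weakly H\"{o}lder inequality of Definition~\ref{def:Holder} and Theorem~\ref{thm:Holder}, stated in the discrete setting, transfers verbatim to the measure-theoretic setting and that the choice $w_e \equiv \mathbf{1}$ lies in ${\cal W}_{r(H)}(\mu \times \nu)$. Both are immediate because $\mu, \nu$ are probability measures (so $\mathbf{1}$ has finite $\|\cdot\|_{r(H)}$-norm equal to $1$) and because the proofs of Theorem~\ref{thm:Holder} are integration-by-duality style arguments that do not use discreteness, as the paper already notes when introducing the continuous framework.
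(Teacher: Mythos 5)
Your proposal is correct and follows essentially the same route as the paper: set $w_e=|w|$ on $E(G)$ and $w_e=\mathbf{1}$ on $E(H)\setminus E(G)$, apply the weakly H\"older inequality guaranteed by Theorem~\ref{thm:Holder}, and use that $\|\mathbf{1}\|_{r(H)}=1$ for probability measures to get $h_G(|w|)\le\|w\|_{r(H)}^{|E(G)|}$. The only difference is cosmetic (the paper writes $w_e=w$ and keeps the absolute value inside the integral), so nothing further is needed.
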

\begin{proof}
For $e \in E(G)$, define $w_e=w$, and for $e \in E(H) \setminus
E(G)$ define $w_e=1$. Since $\|\cdot\|_{r(H)}$ is a norm, by
Theorem~\ref{thm:Holder} we have
$$\int \left| \prod_{e \in E(H)} w_e \right| \le \prod_{e \in E(H)}
\|w_e\|_{r(H)}.$$
By our choice of $w_e$ we get $$h_G(|w|)=\int \left| \prod_{e \in
E(H)} w_e \right| \le \left(\prod_{e \in E(G)}
\|w\|_{r(H)}\right)\left( \prod_{e \in E(H)\setminus
E(G)}\|1\|_{r(H)}\right)=\|w\|_{r(H)}^{|E(G)|},$$ or equivalently
$\|w\|_{r(G)} \le \|w\|_{r(H)}$.
\end{proof}

\begin{remark}
Now by combining Theorem~\ref{thm:strongSid} and
Theorem~\ref{thm:Hypercubes} we see that $\|\cdot\|_{r(Q_n)}$ is an
increasing sequence of norms on ${\cal W}(\mu \times \nu)$. Note
that this is not true for ${\cal W}({\cal I} \times {\cal J})$.

Consider a probability measure $\mu$ and a symmetric function $w \in
{\cal W}(\mu \times \mu)$. In~\cite{Erdos}, Erd\"os and Simonovits
proved that for positive integers $n \le m$, we have
$\|w\|_{r(P_{2n})} \le \|w\|_{r(P_{2m})}$, where $P_k$ denotes the
path of length $k$. They furthermore conjectured
$\|w\|_{r(P_{2n-1})} \le \|w\|_{r(P_{2m-1})}$. This conjecture would
have been followed from Theorem~\ref{thm:strongSid}, if $P_{2m-1}$
was weakly H\"older, but Theorem~\ref{thm:criterion} shows that for
$m>2$, $P_{2m-1}$ is not weakly H\"older.
\end{remark}

\subsection{Banach Space properties \label{sec:Banach}} In this section we study the
Banach space properties of the graph norms.

The modulus of convexity of a Banach space $Y$ is a non-negative
function $\delta_Y$, defined for $\epsilon>0$ by
$$\delta_Y(\epsilon) = \inf \left\{ 1- \left\|\frac{x+y}{2}\right\|: x,y \in Y, \|x\|=\|y\|=1, \|x-y\| \ge \epsilon\right\}. $$
The modulus of smoothness of $Y$ is a function $\rho_Y$ defined for
$\epsilon>0$ by
$$\rho_Y(\epsilon) = \frac{1}{2}\sup \left\{ \left\|x+y\right\|+\left\|x-y\right\|-2: x,y \in Y, \|x\|=1, \|y\|=\epsilon\right\}. $$
Next theorem shows that if $H$ is a H\"{o}lder graph with $m$ edges,
then $\|\cdot\|_H$ has the same moduli of smoothness and convexity
as $\ell_m$. This was known~\cite{Tomczak} for $H=C_{2n}$ due to the
relation to the Schatten-von Neumann norms.
\begin{theorem}
\label{thm:moduli} There exist constants $C_m>0$ and $C'_m>0$ such
that the following holds. Let $H$ be a H\"{o}lder graph with $m$
edges, and ${\cal I}$ and ${\cal J}$ be two infinite sets. Then
\begin{equation}
\label{eq:thm:convexity} C_m \delta_{\ell_m} \le \delta_{{\cal W}_H}
\le \delta_{\ell_m},
\end{equation}
and
\begin{equation}
\label{eq:thm:smoothness} C'_m \rho_{\ell_m} \le \rho_{{\cal W}_H}
\le \rho_{\ell_m}.
\end{equation}
\end{theorem}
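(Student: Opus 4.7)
The plan is to split each of (\ref{eq:thm:convexity}) and (\ref{eq:thm:smoothness}) into two directions: the upper bound $\delta_{{\cal W}_H}\le\delta_{\ell_m}$ and the lower bound $\rho_{{\cal W}_H}\ge \rho_{\ell_m}$ come from embedding $\ell_m$ isometrically into ${\cal W}_H$, while the opposite bounds follow from a Hanner-type inequality extracted from the H\"{o}lder property of $H$.

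For the embedding, partition ${\cal I}=\bigsqcup_i {\cal I}_i$ and ${\cal J}=\bigsqcup_i{\cal J}_i$ into disjoint infinite pieces, pick for each $i$ a reference matrix $W_i\in{\cal W}_H$ supported on ${\cal I}_i\times{\cal J}_i$ with $h_H(W_i)=1$, and assign to $a=(a_i)\in\ell_m$ the element $w_a:=\sum_i a_i W_i$. Assuming $H$ connected (otherwise factor over components), every nonzero term in $h_H(w_a)$ corresponds to a homomorphism landing in a single block, so $h_H(w_a)=\sum_i a_i^m\, h_H(W_i)=\sum_i a_i^m$. Since the H\"{o}lder hypothesis forces every vertex of $H$ to have even degree (Observation~\ref{lem:observations}(ii)) and hence, by the bipartite double-count of edges, forces $m$ to be even, the sum equals $\sum_i|a_i|^m=\|a\|_{\ell_m}^m$, so $\|w_a\|_H=\|a\|_{\ell_m}$. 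Restricting the infimum defining $\delta$ or the supremum defining $\rho$ to a subspace only makes $\delta$ larger and $\rho$ smaller, so this isometric copy of $\ell_m$ yields $\delta_{{\cal W}_H}\le\delta_{\ell_m}$ and $\rho_{{\cal W}_H}\ge\rho_{\ell_m}$ (with $C'_m=1$).

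For the remaining two bounds I would prove the Hanner-type inequality
\begin{equation}
\label{eq:planHanner}
\|u+v\|_H^m+\|u-v\|_H^m\;\le\;(\|u\|_H+\|v\|_H)^m+\bigl|\|u\|_H-\|v\|_H\bigr|^m.
\end{equation}
Expand $h_H(u\pm v)=\sum_{S\subseteq E(H)}(\pm 1)^{|S|}h_{H,S}(u,v)$ where $h_{H,S}(u,v):=\sum\prod_{e\in S}v\prod_{e\notin S}u$. Applying (\ref{eq:Holder}) with $w_e=v$ for $e\in S$ and $w_e=u$ otherwise gives $|h_{H,S}(u,v)|\le\|u\|_H^{m-|S|}\|v\|_H^{|S|}$. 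Splitting by parity of $|S|$ and using $\sum_{k\text{ even}}\binom{m}{k}a^{m-k}b^k=\tfrac{1}{2}((a+b)^m+(a-b)^m)$ together with the scalar identity $|a|+|b|=\max(|a+b|,|a-b|)$ assembles (\ref{eq:planHanner}). Taking $\|u\|_H=\|v\|_H=1$ and $\|u-v\|_H\ge\epsilon$ in (\ref{eq:planHanner}) yields $\delta_{{\cal W}_H}(\epsilon)\ge 1-(1-(\epsilon/2)^m)^{1/m}$, which matches the sharp Hanner formula for $\delta_{\ell_m}$. Taking $\|u\|_H=1,\|v\|_H=\tau$ and combining (\ref{eq:planHanner}) with the power-mean inequality $a+b\le 2((a^m+b^m)/2)^{1/m}$ yields $\|u+v\|_H+\|u-v\|_H\le 2\bigl(\tfrac{1}{2}((1+\tau)^m+(1-\tau)^m)\bigr)^{1/m}$, and this right-hand side is exactly $2(1+\rho_{\ell_m}(\tau))$.

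The main technical obstacle is the sign bookkeeping in deriving (\ref{eq:planHanner}): $h_H(u\pm v)$ can a priori have either sign, so the naive combination $h_H(u+v)+h_H(u-v)=2\sum_{|S|\text{ even}}h_{H,S}$ only controls $|a+b|$ with $a=h_H(u+v)$, $b=h_H(u-v)$, whereas what we need is $|a|+|b|=\|u+v\|_H^m+\|u-v\|_H^m$. The identity $|a|+|b|=\max(|a+b|,|a-b|)$ saves the argument by letting us bound the even-$|S|$ and odd-$|S|$ sums separately and keep the larger one, which is precisely what produces the two summands on the right of (\ref{eq:planHanner}). A secondary point is to verify that the $\ell_m$-embedding is genuinely isometric (and not merely comparable), which uses both connectedness of $H$ to kill cross-terms in $h_H(w_a)$ and evenness of $m$ to identify $\sum_i a_i^m$ with $\sum_i|a_i|^m$; both follow cleanly from the H\"{o}lder hypothesis.
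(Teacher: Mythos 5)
Your overall division of labor is the same as the paper's: the two ``easy'' bounds ($\delta_{{\cal W}_H}\le\delta_{\ell_m}$ and $\rho_{{\cal W}_H}\ge\rho_{\ell_m}$) come from an isometric copy of $\ell_m$ inside ${\cal W}_H$, and the two ``hard'' bounds come from the inequality $\|u+v\|_H^m+\|u-v\|_H^m\le(\|u\|_H+\|v\|_H)^m+\bigl|\|u\|_H-\|v\|_H\bigr|^m$, which is exactly the paper's key inequality (\ref{eq:moduli:key}). Your derivation of that inequality is correct and more complete than the paper's one-line sketch: the expansion over $S\subseteq E(H)$, the bound $|h_{H,S}(u,v)|\le\|u\|_H^{m-|S|}\|v\|_H^{|S|}$ from (\ref{eq:Holder}) (valid with absolute values, since flipping the sign of a single $w_e$ leaves the right-hand side unchanged), and the identity $|a|+|b|=\max(|a+b|,|a-b|)$ to deal with the unknown signs of $h_H(u\pm v)$ all check out. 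You then read off the sharp Hanner-type moduli directly, namely $\delta_{{\cal W}_H}(\epsilon)\ge1-(1-(\epsilon/2)^m)^{1/m}$ and $\rho_{{\cal W}_H}(\tau)\le\bigl(\tfrac{(1+\tau)^m+|1-\tau|^m}{2}\bigr)^{1/m}-1$, which is cleaner than the paper's route through Lemma~\ref{lem:moduli:techn} and the asymptotic formulas for $\delta_{\ell_m}$, $\rho_{\ell_m}$, and it even yields the comparison with constants equal to $1$.

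The gap is in the embedding step. You justify the evenness of $m$ by claiming that the H\"older hypothesis forces every vertex of $H$ to have even degree, citing Observation~\ref{lem:observations}(ii). That observation only shows that an odd-degree vertex prevents $\|\cdot\|_H$ from being a \emph{norm}; it can still be a semi-norm, and by Theorem~\ref{thm:Holder} the H\"older property is equivalent to the semi-norm property, not the norm property. Indeed the paper itself proves that $K_{1,2n}$ is H\"older (proof of Theorem~\ref{thm:Hypercubes}(ii)) although its leaves have degree $1$, and $K_2$ is H\"older with $m=1$, so not even the parity of $m$ follows in general. Your isometry $\|w_a\|_H=\|a\|_{\ell_m}$ genuinely needs $m$ even (for odd $m$ the diagonal span carries $|\sum_i a_i^m|^{1/m}$, which is not even a norm on that span), so this step needs either a restriction to connected H\"older graphs with $m$ even --- which covers all the nontrivial H\"older graphs produced in the paper --- or a separate argument for the parity of $m$. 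Likewise ``factor over components'' does not rescue disconnected $H$: there the diagonal construction produces a geometric mean of the component norms rather than an $\ell_m$-copy (for two disjoint copies of $C_4$ one simply has $\|\cdot\|_H=\|\cdot\|_{C_4}$). In fairness, the paper disposes of this entire step with the unproved assertion that $\ell_m$ is a subspace of ${\cal W}_H$, so you are attempting more detail than the paper gives; but as written your justification of that step is incorrect and needs to be repaired.
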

%
%Let $D=\{-1,1\}^n$ and let $\nu$ be the uniform probability measure
%on $D$. We denote by $\epsilon_n : D \rightarrow \{-1,1\}$, the
%$n$-th coordinate on $D$. Thus, the sequence $(\epsilon_n)$ is an
%i.i.d. sequence of symmetric Bernoulli $\{-1,1\}$-valued random
%variables. These random variables are usually called the Rademacher
%random variables.
%
%\begin{definition}
%\begin{enumerate}
%\item Let $1 \le p \le 2$. A normed space $B$ is called of type $p$
%if there exists a constant $T_p$, such that for all finite sequence
%$(x_i)$ in $B$,
%%
%\begin{equation}
%\label{eq:type} \left(\int \|\sum \epsilon_i x_i \|^2
%d\nu\right)^{1/2} \le T_p \left( \sum \|x_i\|^p \right)^{1/p}.
%\end{equation}
%
%\item  Let $2 \le q \le \infty$. A normed space $B$ is called of
%cotype $q$, if there exists a constant $C_p$, such that for all
%finite sequence $(x_i)$ in $B$,
%%
%\begin{equation}
%\label{eq:cotype} \left( \sum \|x_i\|^q \right)^{1/q} \le C_p
%\left(\int \|\sum \epsilon_i x_i \|^2 d\nu\right)^{1/2}.
%\end{equation}
%\end{enumerate}
%\end{definition}
%
%\begin{remark}
%Clearly if $p_1 \le p_2$, type $p_2$ implies type $p_1$ while cotype
%$p_1$ implies cotype $p_2$. Observe that every Banach space is of
%type $1$ and cotype $\infty$. For $1 \le p \le 2$, the $\ell_p$
%space is of type $p$ and nothing more, and cotype $2$. For $2 \le q
%<\infty$, the $\ell_q$ space is of cotype $q$ and nothing less, and
%of type $2$.
%\end{remark}

For the definitions of type and cotype of a Banach space we refer
the reader to~\cite{MR540367}. It is known
\cite{Figiel,FigielPisier} that the modulus of convexity of power
type $q \ge2$ implies cotype $q$, and the modulus of smoothness of
power type $1 < p \le 2$ implies type $p$. Thus
Theorem~\ref{thm:moduli} determines the type and cotype of ${\cal
W}_H$.

\begin{theorem}
Let $H$ be a graph with $m$ edges such that $\|\cdot\|_H$ is a norm
on ${\cal W}_H({\cal I} \times {\cal J})$. Then ${\cal W}_H$ is of
type $2$ and cotype $m$, and it is not of any cotype $q<m$ if ${\cal
I}$ and ${\cal J}$ are both infinite.
\end{theorem}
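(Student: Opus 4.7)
The plan is to obtain the positive assertions (type $2$ and cotype $m$) as a direct corollary of Theorem~\ref{thm:moduli} via the classical transfer principle of Figiel and Figiel--Pisier between power-type moduli and Rademacher type/cotype, and then to witness the sharpness of the cotype by embedding $\ell_m$ isometrically. First, since $\|\cdot\|_H$ is a norm, Theorem~\ref{thm:Holder} tells us that $H$ is H\"older, so Theorem~\ref{thm:moduli} is available. Observation~\ref{lem:observations}{\bf (ii)} forces every vertex of $H$ to have even degree; counting edges from either side of the bipartition of $H$, this makes $m$ even, and in particular $m\ge 2$.

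Next I would invoke the standard power-type estimates for $\ell_m$ with $m\ge 2$, namely $\delta_{\ell_m}(\epsilon)\gtrsim \epsilon^m$ and $\rho_{\ell_m}(\epsilon)\lesssim \epsilon^2$. Feeding these into Theorem~\ref{thm:moduli} yields
$$\delta_{{\cal W}_H}(\epsilon)\ \ge\ C_m\,\delta_{\ell_m}(\epsilon)\ \gtrsim\ \epsilon^m,\qquad \rho_{{\cal W}_H}(\epsilon)\ \le\ \rho_{\ell_m}(\epsilon)\ \lesssim\ \epsilon^2.$$
Thus ${\cal W}_H$ has modulus of convexity of power type $m$ and modulus of smoothness of power type $2$. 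By Figiel's theorem, convexity of power type $q\ge 2$ gives cotype $q$, so ${\cal W}_H$ has cotype $m$; by the Figiel--Pisier theorem, smoothness of power type $p$ with $1<p\le 2$ gives type $p$, so ${\cal W}_H$ has type $2$.

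For the lower bound on the cotype, assume ${\cal I}$ and ${\cal J}$ are infinite. Split them into countably many infinite blocks ${\cal I}=\bigsqcup_{n\ge 1}{\cal I}_n$, ${\cal J}=\bigsqcup_{n\ge 1}{\cal J}_n$. Fix any nonzero $w_0$ supported on ${\cal I}_1\times{\cal J}_1$ with $h_H(w_0)=1$ (for example, a suitably scaled indicator matrix of a block large compared with $|V(H)|$), and let $w_n$ be the translated copy of $w_0$ supported on ${\cal I}_n\times{\cal J}_n$. Since $H$ is connected (an easy consequence of $\|\cdot\|_H$ being a norm; otherwise $\|\cdot\|_H$ would be a weighted geometric mean of component norms), any homomorphism of $H$ contributing a nonzero term to $h_H(\sum_n a_n w_n)$ must map into a single block, so
$$h_H\!\Bigl(\sum_n a_n w_n\Bigr)\ =\ \sum_n a_n^m\, h_H(w_n)\ =\ \sum_n a_n^m\ =\ \sum_n |a_n|^m,$$
using that $m$ is even for the last equality. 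Hence $(a_n)\mapsto \sum_n a_n w_n$ is an isometric embedding of $\ell_m$ into ${\cal W}_H$. As $\ell_m$ fails cotype $q$ for every $q<m$ and failure of cotype passes from a subspace to the ambient space, the same holds for ${\cal W}_H$.

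The main technical obstacle is this last step: one must be careful that the disjoint-block construction really delivers an isometric (or at least isomorphic) copy of $\ell_m$. Both the connectedness of $H$ and the parity of $m$ play essential roles here: connectedness gives the multiplicative factorization $h_H(\sum a_n w_n)=\sum a_n^m h_H(w_n)$ across blocks, while evenness of $m$ eliminates sign issues when passing from the $m$-th powers back to the absolute values required by the $\ell_m$-norm. With these two structural ingredients, the rest of the argument is a routine application of the already-established Theorem~\ref{thm:moduli} together with the Figiel and Figiel--Pisier implications.
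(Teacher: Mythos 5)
Your route is the paper's route: the positive assertions are read off from Theorem~\ref{thm:moduli} via the Figiel and Figiel--Pisier implications, and the sharpness of the cotype is to come from a copy of $\ell_m$ sitting inside ${\cal W}_H$ (the paper merely asserts that ${\cal W}_H$ contains all finite-dimensional subspaces of $\ell_m$; your disjoint-block construction is the natural way to realize this, and your parity argument via Observation~\ref{lem:observations}~{\bf (ii)}, giving that $m$ is even and hence $m\ge 2$, is fine).

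The genuine gap is the parenthetical claim that connectedness of $H$ is ``an easy consequence of $\|\cdot\|_H$ being a norm.'' This is false: take $H$ to be the disjoint union of two copies of $C_4$. Then $h_H(w)=h_{C_4}(w)^2$, so $\|w\|_H=|h_{C_4}(w)|^{1/4}=\|w\|_{S_4}$ is a norm, yet $H$ is disconnected; your heuristic that a weighted geometric mean of component norms cannot be a norm breaks down exactly when the components are isomorphic, because then the geometric mean \emph{is} the component norm. The failure matters, since connectedness is precisely what your block computation uses: for $H=C_4\sqcup C_4$ (so $m=8$), the block-diagonal construction with $h_{C_4}(w_n)=1$ gives $h_H\bigl(\sum_n a_n w_n\bigr)=\bigl(\sum_n a_n^4\bigr)^2$, i.e.\ an isometric copy of $\ell_4$, not of $\ell_8$; and indeed $\|\cdot\|_{S_4}$ has cotype $4<m$, so for this disconnected $H$ the sharpness assertion itself fails. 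Connectedness is therefore not a removable convenience that you can derive from the hypothesis, but an additional assumption needed for the last claim (one which the paper's own one-line statement that $\ell_m$ embeds into ${\cal W}_H$, and likewise the upper bounds $\delta_{{\cal W}_H}\le\delta_{\ell_m}$, $\rho_{\ell_m}\le\rho_{{\cal W}_H}$ in Theorem~\ref{thm:moduli}, also silently use). With $H$ connected, your block argument is correct and supplies exactly the detail the paper omits; without it, the step --- and the statement --- need to be reformulated.
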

\begin{proof}
The type and cotype follows from the results of Figiel and Pisier
\cite{Figiel,FigielPisier}. It remains to show that if ${\cal I}$
and ${\cal J}$ are infinite, then ${\cal W}_H$ is not of cotype
$q<m$. But if ${\cal I}$ and ${\cal J}$ are infinite, then ${\cal
W}_H$ contains all finite dimensional subspaces of $\ell_m$ as
subspace, and thus it cannot be of cotype $q<m$.
\end{proof}

\section{Proofs}
\subsection{Proof of Theorem~\ref{thm:Holder}} Let us first
develop some tools. Let $w_1 \in {\cal W}({\cal I} \times {\cal J})$
and $w_2 \in {\cal W}({\cal I}' \times {\cal J}')$. Define $w_1
\otimes w_2 \in {\cal W}\left(({\cal I} \times {\cal I}') \times
({\cal J} \times {\cal J}')\right)$, as $[(x,x'),(y,y')] \mapsto
w_1(x,y)w_2(x',y')$. We also define $w^{\otimes k}=w \otimes \ldots
\otimes w$, where $w$ appears $k$ times in the right-hand side. We
have the following trivial observation.
\begin{lemma}
\label{lem:tensor} Let $H=(X,Y;E)$ be a bipartite graph, and $w_e
\in {\cal W}({\cal I} \times {\cal J})$ and $w'_e \in {\cal W}({\cal
I}' \times {\cal J}')$ for $e \in E$. Then
$$\sum \prod_{e \in H}w_e \otimes w'_e= \left(\sum \prod_{e \in H}w_e \right) \left(\sum \prod_{e \in H}w'_e \right).$$
\end{lemma}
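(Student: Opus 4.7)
The plan is to unwind the definition of $\otimes$ and exploit the distributive law, which is all that is needed since the identity is really just a bookkeeping statement. Because each factor $(w_e \otimes w'_e)\bigl((x_u,x'_u),(y_v,y'_v)\bigr) = w_e(x_u,y_v)\,w'_e(x'_u,y'_v)$ is a product of a term depending only on the unprimed indices and a term depending only on the primed indices, the whole edge-product over $E(H)$ splits cleanly as
\[
\prod_{(u,v) \in E} (w_e \otimes w'_e)\bigl((x_u,x'_u),(y_v,y'_v)\bigr) = \Bigl(\prod_{(u,v) \in E} w_e(x_u,y_v)\Bigr)\Bigl(\prod_{(u,v) \in E} w'_e(x'_u,y'_v)\Bigr).
\]

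The first step is therefore to replace the left-hand side by this factored form. The second step is to observe that the outer sum in $h_H(w \otimes w')$ ranges over all assignments $a \mapsto (x_a,x'_a) \in {\cal I} \times {\cal I}'$ for $a \in X$ and $b \mapsto (y_b, y'_b) \in {\cal J} \times {\cal J}'$ for $b \in Y$, and this index set canonically factors as $({\cal I}^X \times {\cal J}^Y) \times (({\cal I}')^X \times ({\cal J}')^Y)$. Thus the double sum splits into an iterated sum over unprimed variables and primed variables independently. The third step is then a single application of distributivity of multiplication over summation to pull the $x$-dependent factor out of the sum over the primed variables, yielding exactly the product of $h_H(w)$ and $h_H(w')$ claimed.

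There is essentially no obstacle: the identity reduces to associativity, commutativity, and distributivity of real addition and multiplication applied to finitely many terms (for each fixed tuple of indices the product has $|E|$ factors). The only mild subtlety worth flagging, if one wanted to be thorough, is that when ${\cal I}, {\cal I}', {\cal J}, {\cal J}'$ are infinite, the re-indexing of the sum is legitimate provided the sums converge absolutely; this is automatic under the norm hypotheses used elsewhere in the paper, and in any case the identity then extends from the finitely supported case by continuity. Hence the lemma follows, justifying the author's description of it as a trivial observation.
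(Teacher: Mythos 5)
Your proof is correct and is exactly the argument the paper has in mind: the paper states Lemma~\ref{lem:tensor} without proof as a ``trivial observation,'' and your unwinding of the definition of $\otimes$, factoring the edge-product into unprimed and primed parts, and splitting the sum over the product index set is the standard verification (with the convergence caveat for infinite index sets appropriately flagged). Nothing further is needed.
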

Now with this lemma in hand we can prove Theorem~\ref{thm:Holder}
with the standard tensor power trick.

\noindent
\begin{proof}[Theorem~\ref{thm:Holder}]
Let $H$ be a H\"{o}lder graph with $m$ edges and $w_1, w_2 \in {\cal
W}_H({\cal I} \times {\cal J})$. Then by expanding $h_H(w_1+w_2)$
and applying (\ref{eq:Holder}) to each term, it is clear that
$$h_H(w_1+w_2) \le  (\|w_1\|_H+\|w_2\|_H)^{m}.$$
This proves that $\|\cdot\|_H$ is a semi-norm. Now suppose that $H$
is not H\"{o}lder. Then there exists $\{w_e \in {\cal W}({\cal I}
\times {\cal J}): e \in E(H)\}$, such that
$$\sum \prod_{e \in E(H)} w_e > \prod_{e \in E(H)} \|w_e\|_{H}.$$
After proper normalization we may assume that $\|w_e\|_H \le 1$, for
every $e \in E(H)$, and $\sum \prod_{e \in E(H)} w_e=c$, for some
$c>1$. Now by Lemma~\ref{lem:tensor}
\begin{eqnarray*}
\left\|\sum_{e \in E(H)} w_e^{\otimes 2n}\right\|_H^m &=& \sum_{x_u
\in {\cal I}, y_v \in {\cal J}} \prod_{e' \in E(H)} \left(\sum_{e
\in E(H)} w_e^{\otimes 2n}\right) \\ &=& \sum_{f:E(H) \rightarrow
E(H)} \left( \sum_{x_u \in {\cal
I}, y_v \in {\cal J}} \prod_{e \in E(H)} w_{f(e)}^{\otimes 2n}\right) \\
&=& \sum_{f:E(H) \rightarrow E(H)} \left( \sum_{x_u \in {\cal I},
y_v \in {\cal J}} \prod_{e \in E(H)} w_{f(e)} \right)^{\otimes 2n}\\
&\ge& \left(\sum_{x_u \in {\cal I}, y_v \in {\cal J}}\prod_{e \in E(H)} w_e\right)^{2n} =c^{2n},\\
\end{eqnarray*}
while for every $e \in E(H)$, by Lemma~\ref{lem:tensor} we have that
$$\left\|w_e^{\otimes 2n}\right\|_H=\left\|w_e\right\|_H^{2n} \le 1.$$
Thus for large enough $n$, the triangle inequality fails:
\begin{equation}
\label{eq:thm:Holder} \sum_{e \in E(H)} \left\|w_e^{\otimes
2n}\right\|_H \le m < c^{2n/m} \le \left\|\sum_{e \in E(H)}
w_e^{\otimes 2n}\right\|_H.
\end{equation}
The proof of the weakly H\"{o}lder case is similar.
\end{proof}

It is easy to see that (\ref{eq:thm:Holder}) implies the following
corollary.

\begin{corollary}
\label{cor:quasi} Let $H$ be a graph such that $\|\cdot\|_H$ is a
quasi-norm (semi-quasi-norm, respectively) on ${\cal W}_H$. Then
$\|\cdot\|_H$ is a norm (semi-norm respectively) on ${\cal W}_H$.
The same statement holds for $\|\cdot\|_{r(H)}$.
\end{corollary}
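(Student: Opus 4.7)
The plan is to run the contrapositive of the statement and use the tensor-power construction from the proof of Theorem~\ref{thm:Holder} to show that if $H$ fails to be H\"older, then the triangle inequality fails by an \emph{arbitrarily large} multiplicative factor, ruling out any quasi-norm constant $K$.

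So first, suppose $\|\cdot\|_H$ is a quasi-norm with constant $K$; we want to show it is a norm. Homogeneity and positive-definiteness come for free from the quasi-norm assumption, so only the triangle inequality is at issue. By Theorem~\ref{thm:Holder} it suffices to show $H$ is H\"older. Assume towards contradiction that $H$ is not H\"older. Then exactly as in the proof of Theorem~\ref{thm:Holder}, we can find weights $w_e \in {\cal W}({\cal I} \times {\cal J})$, $e \in E(H)$, normalized so that $\|w_e\|_H \le 1$ for every edge and $\sum \prod_{e \in E(H)} w_e = c$ for some $c > 1$.

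Next I would apply the tensor-power trick exactly as in (\ref{eq:thm:Holder}): passing to the $2n$-th tensor power, Lemma~\ref{lem:tensor} gives $\|w_e^{\otimes 2n}\|_H = \|w_e\|_H^{2n} \le 1$ for each $e$, while the same expansion appearing in the original proof yields
\[
\left\|\sum_{e \in E(H)} w_e^{\otimes 2n}\right\|_H \ge c^{2n/m}.
\]
On the other hand, iterating the quasi-norm inequality $\|x+y\|_H \le K(\|x\|_H + \|y\|_H)$ over the $m$ summands (binary grouping gives a factor bounded by $K^{\lceil \log_2 m \rceil}$) yields
\[
\left\|\sum_{e \in E(H)} w_e^{\otimes 2n}\right\|_H \le K^{\lceil \log_2 m \rceil} \sum_{e \in E(H)} \|w_e^{\otimes 2n}\|_H \le K^{\lceil \log_2 m \rceil}\, m.
\]
Since $c > 1$, picking $n$ large enough makes $c^{2n/m}$ exceed $K^{\lceil \log_2 m \rceil} m$, contradicting the quasi-norm inequality. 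Therefore $H$ is H\"older, and Theorem~\ref{thm:Holder} finishes the argument. The semi-quasi-norm case is identical except that positive-definiteness is dropped on both sides. The analogous statement for $\|\cdot\|_{r(H)}$ follows by the same tensor-power argument using the weakly H\"older version of (\ref{eq:thm:Holder}) (with absolute values), which is already invoked at the end of the proof of Theorem~\ref{thm:Holder}.

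The one step that needs a small amount of care, rather than being truly routine, is the iterated quasi-norm bound for $m$ summands; but this is a standard binary-tree induction and gives a constant depending only on $K$ and $m$, both of which are fixed independently of $n$, while $c^{2n/m} \to \infty$. That monotonic blow-up is the whole point of the tensor-power trick and is what makes the corollary essentially a rephrasing of (\ref{eq:thm:Holder}).
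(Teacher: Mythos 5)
Your proposal is correct and is exactly the argument the paper intends when it says the corollary follows from (\ref{eq:thm:Holder}): the tensor-power construction makes the ratio between $\left\|\sum_e w_e^{\otimes 2n}\right\|_H$ and $\sum_e \|w_e^{\otimes 2n}\|_H$ grow without bound, which no fixed quasi-norm constant (even after the $K^{\lceil \log_2 m\rceil}$ from iterating the quasi-triangle inequality) can absorb, so $H$ must be H\"older and Theorem~\ref{thm:Holder} applies. The semi-quasi-norm and $\|\cdot\|_{r(H)}$ cases are handled just as the paper does, so no further comment is needed.
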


\subsection{Proof of Theorem~\ref{thm:Hypercubes}
\label{sec:hypercubes}}

{\bf (i):} Suppose that $G$ and $H$ are both H\"{o}lder with $m$ and
$m'$ edges, respectively. Consider $\{w_e \in {\cal W}({\cal I}
\times {\cal J}): e \in E(G \times^b H)\}$. We have
\begin{eqnarray}
\nonumber \sum \prod_{e \in E(G \times^b H)} w_e &=& \sum
\prod_{(a,b) \in E(G)} \prod_{(u,v) \in E(H)}
w_{[(a,u),(b,v)]}(x_{[a,u]},y_{[b,v]})
\\ \label{eq:biprod1} &\le& \prod_{(a,b) \in E(G)} \left|\sum \prod_{(a',b') \in E(G)} \prod_{(u,v) \in E(H)}
w_{[(a,u),(b,v)]}(x_{[a',u]},y_{[b',v]})\right|^{1/m} \\ \nonumber
&=& \prod_{(a,b) \in E(G)} \left|\sum \prod_{(u,v) \in E(H)}
\prod_{(a',b') \in E(G)} w_{[(a,u),(b,v)]}(x_{[a',u]},y_{[b',v]})\right|^{1/m} \\
\label{eq:biprod2} &\le& \prod_{(a,b) \in E(G)} \left|\prod_{(u,v)
\in E(H)} \left| \sum \prod_{(u',v') \in E(H)} \prod_{(a',b') \in
E(H)}
w_{[(a,u),(b,v)]}(x_{[a',u']},y_{[b',v']})\right|^{1/m'}\right|^{1/m} \\
\nonumber&=& \prod_{(a,b) \in E(G)} \prod_{(u,v) \in E(H)} \left|
\sum \prod_{(u',v') \in E(H)} \prod_{(a',b') \in E(H)}
w_{[(a,u),(b,v)]}(x_{[a',u']},y_{[b',v']})\right|^{1/mm'} \\
\nonumber &=& \prod_{[(a,u),(b,v)] \in E(G)}
\left\|w_{[(a,u),(b,v)]}\right\|_{G \times^b H},
\end{eqnarray}
where in (\ref{eq:biprod1}) and (\ref{eq:biprod2}) we applied
(\ref{eq:Holder}). The case of weakly H\"{o}lder is similar.

\noindent {\bf (ii):} The fact that $K_{1,2n}$ is H\"{o}lder follows
from the classical H\"{o}lder inequality. Indeed let
$X(K_{1,2n})=\{u\}$ and $Y(K_{1,2n})=\{v_1,\ldots,v_{2n}\}$. Then
\begin{eqnarray*}
\sum  \prod_{i=1}^{2n} w_{(u,v_i)} &=&\sum_{x_u \in {\cal I}}
\prod_{i=1}^{2n} \left(\sum_{y_{v_i} \in {\cal J}} w_{(u,v_i)}
\right) \\&\le& \prod_{i=1}^{2n} \left(\sum_{x_u \in {\cal I}}
\left( \sum_{y_{v_i} \in {\cal J}} w_{(u,v_i)}
\right)^{2n}\right)^{1/2n}=\prod_{i=1}^{2n}
\|w_{(u,v_i)}\|_{K_{1,2n}}.
\end{eqnarray*}
Similarly one can show that $K_{1,n}$ is weakly H\"{o}lder. Now the
assertion follows from {\bf (i)} and the fact that $K_{m,n}=K_{1,n}
\times^b K_{m,1}$.

\noindent {\bf (iii):} Note that $\|\cdot\|_{K_2^{\between 2k}}$ is
just the $\ell_{2k}({\cal I} \times {\cal J})$ norm. Now notice that
$G^{\between 2k}=G \times^{b} K_2^{\between 2k}$, and hence is
weakly H\"{o}lder. But it is easy to see that $\|.\|_{G^{\between
2k}}=\|\cdot\|_{r(G^{\between 2k})}$.

\noindent {\bf (iv):} Let $Q_n$ denote the $n$-dimensional
hypercube. We identify the vertices of a hypercube $Q_n$ with the
$0$-$1$ strings of length $n$, where two vertices are adjacent if
their strings differ in one bit. With this notation we can
concatenate two nodes $s \in V(Q_n)$ and $v \in V(Q_m)$ to obtain
the node $sv \in V(Q_{n+m})$. Note that $Q_n$ is bipartite. We use
the convention that $X(Q_n)$ is the set of vertices with an even
number of $1$'s in their strings, and $Y(Q_n)$ is the rest of the
vertices.

%For every vertex $v \in V(Q_n)$, denote by $V_v(Q_n) \in \{X,Y\}$,
%the set of all vertices that are in the same partition as $v$ in the
%bipartization of $Q_n$. Suppose that $w_{(u,v)} \in {\cal W}({\cal
%I} \times {\cal J})$. In the following we shall use the notation
%$w_{(v,u)}:=w_{(u,v)}^t \in {\cal W}({\cal J} \times {\cal I})$,
%i.e. $w_{(v,u)}(x,y):=w_{(u,v)}(y,x)$.

For all $u \in X$ and $v \in Y$, let $f_u :{\cal I} \rightarrow
\mathbb{R}$ and $g_v : {\cal J} \rightarrow \mathbb{R}$ be
functions, and for every edge $e \in Q_n$ let $w_e \in {\cal
W}({\cal I} \times {\cal J})$. We claim the following strengthening
of Theorem~\ref{thm:Hypercubes}:

\begin{claim}
\label{claim:hypercubes}
\begin{equation}
\label{eq:cube} \sum \left| \prod_{u \in X(Q_n), v \in Y(Q_n)}
f_u(x_u) g_v(y_v) \prod_{e=(a,b) \in E} w_e(x_a,y_b)\right|  \le
\prod_{e=(a,b) \in E} \left(\sum |R_e| \ \right)^{1/|E(Q_n)|},
\end{equation}
where for $e=(a,b)$,
$$R_e:=
\prod_{u \in X(Q_n), v \in Y(Q_n)} f_a(x_u)g_b(y_v) \prod_{(s,t) \in
E} w_e(x_s,y_t).$$
\end{claim}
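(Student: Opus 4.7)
The plan is induction on the dimension $n$. The base case $n=1$ is trivial: $Q_1 = K_2$ has a single edge, so with $|X|=|Y|=|E|=1$ both sides of~(\ref{eq:cube}) reduce to $\sum_{x,y}|f_0(x)g_1(y)w(x,y)|$ and the inequality is an equality. It is worth noting that Claim~\ref{claim:hypercubes} is logically equivalent to $Q_n$ being weakly H\"older (one absorbs the $f_u,g_v$ into edge weights via $\tilde w_e = |f_a|^{1/n}|g_b|^{1/n}|w_e|$, using that $\deg_{Q_n}(v)=n$), but the extra generality of arbitrary vertex functions in the statement is precisely what allows the induction to close.

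For the inductive step, decompose $V(Q_{n+1})$ along one coordinate: each $s\in V(Q_n)$ yields two vertices $s0,s1\in V(Q_{n+1})$ of opposite parity, one in $X(Q_{n+1})$ and the other in $Y(Q_{n+1})$. The edge set of $Q_{n+1}$ then partitions into a $0$-copy of $E(Q_n)$, a $1$-copy of $E(Q_n)$ (each isomorphic to $Q_n$ as a bipartite graph, up to a parity-flip automorphism), and the $2^n$ matching edges $(s0,s1)$. The core step is to apply the Cauchy--Schwarz inequality to write the left-hand side of~(\ref{eq:cube}) as a product of two factors that decouple the $0$-side and $1$-side sums while distributing the matching weights $w^m_s$ between the two halves. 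Each resulting factor is then of exactly the form covered by Claim~\ref{claim:hypercubes} for $Q_n$, with vertex functions that combine the original $f_u,g_v$ on $Q_{n+1}$ with the matching-edge weights; invoke the inductive hypothesis on each half. Combining the two bounds, using $|E(Q_{n+1})|=(n+1)2^n=2|E(Q_n)|+2^n$ and the degree identity $\deg_{Q_{n+1}}(v)=n+1$, should reconstruct the right-hand side of~(\ref{eq:cube}) for $Q_{n+1}$ factor by factor.

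The hard part will be designing the Cauchy--Schwarz splitting sharply enough. A naive symmetric splitting (writing $|F|$ as a product of two square roots, each carrying one side of $E(Q_n)$ together with half the matching) produces a bound involving $h_{Q_n}$ of \emph{squared} non-matching weights and $L^1$-type norms on the matching edges; the exponents one obtains do not agree directly with the target $h_{Q_{n+1}}$-factors on the right-hand side, as one can verify already in the constant-$w$ case on a finite index set. The correct manoeuvre must therefore absorb each matching weight $w^m_s$ into vertex functions on the remaining $Q_n$-copy in a specifically weighted way \emph{before} Cauchy--Schwarz is applied, so that the two subsequent invocations of the inductive hypothesis combine, via the degree count in $Q_{n+1}$, to produce exactly the $h_{Q_{n+1}}$-factors of the right-hand side. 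This is precisely why Claim~\ref{claim:hypercubes} must be stated with arbitrary vertex functions $f_u,g_v$ in the first place: they are the vehicle for carrying the matching weights through the inductive step.
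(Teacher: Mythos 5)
Your base case, your observation that the claim is the weak H\"older property of $Q_n$ dressed up with vertex functions, and your intuition that the vertex functions exist to carry the matching weights through the induction are all correct (the paper's Step~1 does exactly this: each matching edge $(0u,1u)$ is absorbed into a vertex function on a copy of $Q_{n-1}$ living over the index set ${\cal I}\times{\cal J}$). But the proposal has a genuine gap at precisely the point you flag and then wave past. You concede that the symmetric Cauchy--Schwarz splitting of the two $Q_n$-halves gives the wrong exponents, and you then assert that ``the correct manoeuvre must'' distribute the matching weights so that two invocations of the inductive hypothesis reproduce the right-hand side of~(\ref{eq:cube}) for $Q_{n+1}$. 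No such single-shot manoeuvre is exhibited, and the exponent mismatch you noticed in the constant-weight test is a symptom of a structural obstruction, not of a suboptimal choice of splitting: after any one reduction of $Q_{n+1}$ to smaller cubes, the factors one obtains are sums in which the edges of $Q_{n+1}$ are weighted unevenly (in the paper's notation, one gets maps $\psi_e$ that are not of the single-edge form $\phi_e$), so the uniform exponent $1/|E(Q_{n+1})|$ on every edge cannot be read off directly.

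The paper's proof shows how much extra machinery is actually needed to close this gap. It performs \emph{two} different reductions --- one collapsing each matching edge with its endpoints into a vertex of $Q_{n-1}$, and a second collapsing $Q_n$ onto $Q_2$ so that the $C_4$ case (Lemma~\ref{lem:base}, i.e.\ the Schatten--von Neumann inequality) applies --- then iterates the pair of reductions $k$ times, observes that all composed maps in which a ``collapsing'' $Q_2$-edge occurs reduce to single-edge maps $\phi_e$, leaving only an exponent mass $2^{-k}$ of uncontrolled terms, bounds those uniformly (after truncating the functions, with dominated convergence removing the truncation at the end), and finally exploits the edge-transitivity of $Q_n$ to average the resulting bound over permutations of the edges and lets $k\to\infty$ to equalize all exponents at $1/|E(Q_n)|$. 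Your outline contains no mechanism playing the role of this iteration-plus-symmetrization-plus-limit argument (nor any substitute for it), so as written it does not establish the claim; supplying that mechanism is the actual content of the proof.
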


If one substitutes $f_u=1$, $g_v=1$ for every $u \in X(Q_n)$, and $v
\in Y(Q_n)$, then Claim~\ref{claim:hypercubes} reduces to
Theorem~\ref{thm:Hypercubes} {\bf (iv)}. So it is sufficient to
prove the claim. Before proving Claim~\ref{claim:hypercubes} in its
general form we prove it for $n=2$ as a separate lemma. First notice
that without loss of generality we can assume that $f_u,g_v \ge0$
and $w_e \ge 0$ for every $u \in X(Q_n)$, $v \in Y(Q_n)$, and $e \in
E(Q_n)$, and drop the absolute value signs from the proof.

\begin{lemma}
\label{lem:base} Claim~\ref{claim:hypercubes} holds for $n=2$.
\end{lemma}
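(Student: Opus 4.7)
The plan is to prove Lemma~\ref{lem:base} by a standard double Cauchy--Schwarz argument, mimicking the proof that $\|\cdot\|_{C_4}$ is a norm, but keeping track of the vertex-functions $f_u, g_v$. Note that $Q_2 = C_4$; let me write $X(Q_2) = \{00, 11\}$, $Y(Q_2) = \{01, 10\}$, set $f = f_{00}$, $f' = f_{11}$, $g = g_{01}$, $g' = g_{10}$, and label the four edges as $e_1 = (00, 01)$, $e_2 = (00, 10)$, $e_3 = (11, 01)$, $e_4 = (11, 10)$ with corresponding weights $w_1, w_2, w_3, w_4$. By the observation made just before the lemma statement I may assume all these quantities are non-negative and drop the absolute values. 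Write
\[
S \;=\; \sum_{x_0, x_1, y_0, y_1} f(x_0) f'(x_1) g(y_0) g'(y_1)\, w_1(x_0,y_0)\, w_2(x_0,y_1)\, w_3(x_1,y_0)\, w_4(x_1,y_1).
\]

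The first step is to group the sum over $(y_0, y_1)$: writing $P(x_0, x_1) = \sum_y g(y) w_1(x_0,y) w_3(x_1,y)$ and $Q(x_0, x_1) = \sum_y g'(y) w_2(x_0,y) w_4(x_1,y)$, I get $S = \sum_{x_0,x_1} f(x_0) f'(x_1)\, P\, Q$. Then Cauchy--Schwarz on $(x_0, x_1)$ with weight $f(x_0) f'(x_1)$ gives $S^2 \le A_{13} \cdot A_{24}$, where $A_{13}$ and $A_{24}$ are obtained by squaring $P$ and $Q$ respectively. After expansion, $A_{13}$ only involves the weights $w_1, w_3$ (paired across the $y$-variable) together with $f, f', g$, and similarly $A_{24}$ only involves $w_2, w_4$ together with $f, f', g'$.

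The second step is to separate the two weights within each $A_{ij}$. Rewrite $A_{13}$ by grouping over the $x$-variables: it becomes $\sum_{y_0, y_1} g(y_0) g(y_1)\, [\sum_x f(x) w_1(x,y_0) w_1(x,y_1)][\sum_x f'(x) w_3(x,y_0) w_3(x,y_1)]$. Applying Cauchy--Schwarz on $(y_0, y_1)$ with weight $g(y_0) g(y_1)$ yields $A_{13}^2 \le (\sum R_{e_1})(\sum R_{e_3})$, since the square of the first bracket, summed with the $g$-weights, produces exactly the expression $\sum R_{e_1}$ (here every $f_u$ has been replaced by $f_{00} = f$, every $g_v$ by $g_{01} = g$, and every edge-weight by $w_1$), and similarly for the second bracket and $e_3$. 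The same manoeuvre on $A_{24}$, with the $g'$-weights and the roles of $(w_2, w_4)$, produces $A_{24}^2 \le (\sum R_{e_2})(\sum R_{e_4})$. Combining, $S^4 \le \prod_{i=1}^4 \sum R_{e_i}$, which is the desired inequality since $|E(Q_2)| = 4$.

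There is no substantial obstacle here; the only thing to get right is the bookkeeping, namely verifying that the quadratic expressions produced by each Cauchy--Schwarz step precisely match the definition of $\sum R_{e_i}$. In particular one must check that after each squaring step the two copies of $f$ at the two $X$-vertices become $f_{00}$ and $f_{11}$ (resp.\ the two copies of $g$ become $g_{01}$ and $g_{10}$) exactly as prescribed by $R_e$ in terms of the endpoints $a, b$ of the edge $e$. This is the point where the choice of which variable to Cauchy--Schwarz over (and with which weights) is forced by the structure of $Q_2$, and it is also why this base case will serve as the inductive anchor for the general Claim~\ref{claim:hypercubes}.
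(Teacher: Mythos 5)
Your proof is correct, but it takes a different route from the paper. The paper's proof is a two-line reduction to its Corollary~\ref{cor:cycle} (the H\"older property of $C_4$, itself derived from the Schatten--von Neumann trace inequality, Theorem~\ref{thm:Trace}): since every vertex of $Q_2\cong C_4$ has degree $2$, one absorbs the vertex functions into the edge weights by setting $w'_e(x,y)=\sqrt{f_u(x)}\,w_e(x,y)\sqrt{g_v(y)}$ for $e=(u,v)$, notes that the left-hand side of (\ref{eq:cube}) is $\sum\prod w'_e$ and that $\|w'_e\|_{C_4}^4=\sum R_e$, and applies the corollary. You instead reprove the needed inequality from scratch by two applications of weighted Cauchy--Schwarz, first over the $X$-variables with weight $f(x_0)f'(x_1)$ and then over the $Y$-variables with weights $g(y_0)g(y_1)$ (resp.\ $g'(y_0)g'(y_1)$), and your bookkeeping is right: the squared brackets do produce exactly $\sum R_{e_i}$, with the \emph{same} function $f_a$ at both $X$-vertices and the same $g_b$ at both $Y$-vertices, as the definition of $R_e$ requires (your explicit identification in the second paragraph is correct; the phrase in your closing paragraph about the two copies becoming ``$f_{00}$ and $f_{11}$'' is a slip, since $R_{e}$ places $f_a$ at both $X$-vertices, but it does not affect the argument). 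What each approach buys: the paper's version is shorter given the machinery already in place and makes transparent that the lemma is just the $C_4$ H\"older inequality with degree-$2$ vertex weights split as square roots; yours is self-contained, avoids the trace inequality entirely, and is essentially the elementary double Cauchy--Schwarz proof of that same inequality with the vertex functions carried along as measures. Both arguments use the reduction to nonnegative $f_u,g_v,w_e$ stated just before the lemma (the paper to take square roots, you to justify the weighted Cauchy--Schwarz), so no gap there.
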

\begin{proof}
For an edge $e=(u,v) \in Q_2$, define
$$w'_e : {\cal I} \times {\cal J} \rightarrow \mathbb{R},$$
$$w'_e: (x,y)\mapsto \sqrt{f_u(x)}w_e(x,y)\sqrt{g_v(y)}.$$
Now since $Q_2$ is isomorphic to $C_4$, for $n=2$, we have
$$\mbox{L.H.S. of (\ref{eq:cube})}=\sum \prod_{e=(u,v) \in E(Q_2)} w'_e(x_u,y_v) \le
\prod_{e \in E(C_4)} \|w'_{e}\|_{C_4} =\prod_{e \in E(Q_2)}
\left(\sum R_e \right)^{1/4}.$$
\end{proof}

We now turn to the proof of Claim~\ref{claim:hypercubes} in its
general form. The proof is divided into several steps, so that
hopefully the main ideas can be distinguished from technicalities.
Let us first introduce some notation that helps us to keep the proof
short.

\begin{remark}
\label{rem:homs} Let $\phi:V(Q_n) \rightarrow V(Q_n)$ be such that
$\phi(u) \in X$ and $\phi(v) \in Y$ for every $u \in X$ and $v \in
Y$, and furthermore $(\phi(u),\phi(v)) \in E$ if $(u,v) \in E$.
Define
$$R_\phi=\prod_{u \in X(Q_n), y \in Y(Q_n)} f_{\phi(u)}(x_u)
g_{\phi(v)}(y_v) \prod_{(s,t) \in E(Q_n)}
w_{(\phi(s),\phi(t))}(x_s,y_t).$$
For example, for $e=(a,b)$, let $\phi_e$ be defined as
$\phi_e(u)=a$, if  $u \in X(Q_n)$ and $\phi_e(u)=b$, otherwise. Then
$R_e$ as it is defined in Claim~\ref{claim:hypercubes} is in fact
the same as $R_{\phi_e}$, and if we denote by $id.$ the identity map
from $V(Q_n)$ to itself, then Claim~\ref{claim:hypercubes} says that
\begin{equation}
\label{eq:cubeReform}  \sum |R_{id.}| \le \prod_{e \in
E(Q_n)}\left(\sum |R_{\phi_e}|\right)^ {1/|E(Q_n)|}.
\end{equation}
\end{remark}

 \noindent
\begin{proof}[Claim~\ref{claim:hypercubes}]
 We prove the claim by
induction. Before engaging in the calculations, let us explain the
intuition behind the proof. The variables $x_u, y_v$ assign some
values to the vertices. The product in the left-hand side of
(\ref{eq:cube}) is the product of the functions $f_u, g_v$ and $w_e$
where $f_u$ and $g_v$ depend only on the values that are assigned to
the vertices $u$ and $v$ respectively, and $w_e$ depends only on the
values that are assigned to the endpoints of $e$. The first step in
the proof is to group these functions together so that they can be
interpreted as the same product but for $Q_{n-1}$ instead of $Q_n$.
Then we can apply the induction hypothesis.

\noindent {\bf Step 1:} We regroup the product in the left-hand side
of~(\ref{eq:cube}) in the following way.
\begin{eqnarray}
\nonumber \lefteqn{\prod_{u \in X(Q_n), v \in Y(Q_n)} f_u g_v
\prod_{e \in E(Q_n)} w_e =} \\\label{eq:Ind1} && \prod_{u \in
X(Q_{n-1}), v \in Y(Q_{n-1})} (f_{0u} w_{(0u,1u)} g_{1u})(f_{1v}
w_{(1v,0v)} g_{0v}) \prod_{(s,t) \in E(Q_{n-1})}\left(
w_{(0s,0t)}w_{(1t,1s)}\right).
\end{eqnarray}
The left-hand side of (\ref{eq:Ind1}) is the product in the
left-hand side of~(\ref{eq:cube}), and the right-hand side
of~(\ref{eq:Ind1}) can be interpreted as the same product for
$Q_{n-1}$ but on different index sets in the following way: Let the
value assigned to the vertices $u \in X(Q_{n-1})$ and $v \in
Y(Q_{n-1})$ be the pair $[x_{0u},y_{1u}]$ and $[x_{1v},y_{0v}]$
respectively. Note that in the right-hand side of~(\ref{eq:Ind1}),
$f_{0u} w_{(0u,1u)} g_{1u}$ depends only on $[x_{0u},y_{1u}]$, and
$f_{1v} w_{(1v,0v)} g_{0v}$ depends only on $[x_{1v},y_{0v}]$, and
finally $w_{(0s,0t)}w_{(1t,1s)}$ depends only on the pair
$([x_{0s},y_{1s}], [x_{1t},y_{0t}])$.

More formally, to prove the claim for $n$ and ${\cal I} \times {\cal
J}$, we use the induction hypothesis for $n-1$ with the index set
$({\cal I} \times {\cal J}) \times ({\cal I} \times {\cal J})$.
Every vertex $v \in Q_{n-1}$ corresponds to two adjacent vertices
$0v$ and $1v$.  To use the induction hypothesis, for $u \in
X(Q_{n-1})$, define
$$f'_u: {\cal I} \times {\cal J} \rightarrow \mathbb{R}$$
$$f'_u: [x,y] \mapsto f_{0u}(x) w_{(0u,1u)}(x,y) g_{1u}(y).$$
For $v \in Y(Q_{n-1})$, define
$$g'_v: {\cal I} \times {\cal J} \rightarrow \mathbb{R}$$
$$g'_v: [x,y] \mapsto f_{1v}(x) w_{(1v,0v)}(x,y) g_{0v}(y).$$
and for $e=(u,v) \in E(Q_{n-1})$,
$$w'_e: ({\cal I} \times {\cal J}) \times ({\cal I} \times {\cal J}) \rightarrow \mathbb{R}$$
$$w'_e: ([x,y], [x',y']) \mapsto w_{(0u,0v)}(x,y')w_{(1v,1u)}(x',y).$$
Then
\begin{eqnarray}
\nonumber \lefteqn{\mbox{L.H.S. of (\ref{eq:cube})} = \mbox{R.H.S. of (\ref{eq:Ind1})}=} \\ &&\label{eq:Ind2} \sum \left|
\prod_{u \in X(Q_{n-1}), v \in Y(Q_{n-1})} f'_u([x_{0u},y_{1u}])
g'_v([x_{1v},y_{0v}]) \prod_{e=(s,t) \in E(Q_{n-1})}
w'_e([x_{0s},y_{1s}],[x_{1t},y_{0t}])\right|.
\end{eqnarray}
Then we apply the induction hypothesis to the right-hand side
of~(\ref{eq:Ind2}) and obtain

\begin{eqnarray*}
\mbox{R.H.S. of (\ref{eq:Ind2})} &\le& \prod_{e=(a,b) \in E(Q_{n-1})} \left(\sum \left|
\prod_{u \in X(Q_{n-1}), v \in Y(Q_{n-1})} f'_a([x_{0u},y_{1u}])
g'_b([x_{1v},y_{0v}]) \right.\right.\\ & &\left. \left.  \prod_{(s,t) \in E(Q_{n-1})}
w'_{(a,b)}([x_{0s},y_{1s}],[x_{1t},y_{0t}])\right| \right)^{1/|E(Q_{n-1})|} \\ &=&
\prod_{e=(a,b) \in E(Q_{n-1})} \left(\sum R_{\psi_e}
\right)^{1/|E(Q_{n-1})|},
\end{eqnarray*}
where for $e=(a,b)$
\begin{equation}
\begin{array}{lcr}
\psi_e(0u)=0a & \qquad & \forall u \in X(Q_{n-1}) \\
\psi_e(1u)=1a & \qquad & \forall u \in X(Q_{n-1}) \\
\psi_e(0v)=0b & \qquad & \forall v \in Y(Q_{n-1}) \\
\psi_e(1v)=1b & \qquad & \forall v \in Y(Q_{n-1})
\end{array}
\end{equation}
Combining this with (\ref{eq:Ind2}) we obtain
\begin{equation}
\label{eq:Qn-2} \mbox{L.H.S. of (\ref{eq:cube})} \le \prod_{e=(a,b) \in E(Q_{n-1})} \left(\sum R_{\psi_e}
\right)^{1/|E(Q_{n-1})|}.
\end{equation}

\noindent {\bf Step 2:} In this step we obtain a different bound for
the left-hand side of (\ref{eq:cube}). In Step 1, for every $v \in
Q_{n-1}$, we grouped the two vertices $0v$, $1v$ and the edge
between them as one vertex (see~(\ref{eq:Ind1})) and this reduced
$Q_n$ to $Q_{n-1}$. In this step we reduce $Q_n$ to $Q_2$. For every
vertex $s \in \{00,11\} =X(Q_2)$, define
$$f''_s= \left(\prod_{u \in X(Q_{n-2}), v \in Y(Q_{n-2})} f_{su} g_{sv}\right)\left( \prod_{(u,v) \in E(Q_{n-2})}
w_{(su,sv)}\right),$$
for every $t\in \{01,10\}=Y(Q_2)$, define
$$g''_t= \left(\prod_{u \in X(Q_{n-2}), v \in Y(Q_{n-2})} f_{tv} g_{tu}\right)\left( \prod_{(u,v) \in E(Q_{n-2})}
w_{(tv,tu)}\right),$$
and for every edge $e=(s,t) \in E(Q_2)$,
$$w''_e= \prod_{u \in X(Q_{n-2}), v \in Y(Q_{n-2})} w_{(su,tu)}w_{(tv,sv)}.$$
Note that the product in the left-hand side of~(\ref{eq:cube}) is
equal to
$$\left(\prod_{s \in X(Q_2), t\in Y(Q_2)} f''_s g''_t\right) \left( \prod_{(s,t) \in E(Q_2)} w''_{(s,t)}\right).$$
We can apply Lemma~\ref{lem:base} with proper index sets to these
functions. We get
\begin{equation}
\label{eq:Q2} \mbox{L.H.S of (\ref{eq:cube})} \le
\prod_{e=(s,t) \in E(Q_{2})} \left(\sum R_{\rho_e} \right)^{1/4},
\end{equation}
where for $e=(s,t)$
\begin{equation}
\begin{array}{lcr}
\rho_e(s'v)=sv & \qquad & \forall s' \in X(Q_2), v \in V(Q_{n-2}) \\
\rho_e(t'v)=tv & \qquad & \forall t' \in Y(Q_2), v \in V(Q_{n-2}) \\
\end{array}
\end{equation}

\noindent {\bf Step 3}: In this step we combine Steps 1 and 2. Note
that in~(\ref{eq:Qn-2}), the product $R_{\psi_e}$ has the same form
as the product in the left-hand side of (\ref{eq:cube}). Thus we can
apply Step 2 to $\sum R_{\psi_e}$. For $e \in E(Q_{n-1})$ we get

\begin{equation}
\sum R_{\psi_e} \le \prod_{e'=(s,t) \in E(Q_2)}\left(\sum
R_{\rho_{e'} \circ \psi_e}\right)^{1/4}.
\end{equation}
Combining this with (\ref{eq:Qn-2}) we obtain
\begin{equation}
\label{eq:step3} \mbox{L.H.S. of (\ref{eq:cube})} \le \prod_{e \in
E(Q_{n-1})}  \left( \prod_{e' \in E(Q_2)}\left(\sum R_{\rho_{e'}
\circ \psi_e}\right)^{1/4|E(Q_{n-1})|} \right)
\end{equation}

\noindent {\bf Step 4}: Now for some integer $k>0$ we repeatedly
apply Step 3, and by (\ref{eq:step3}) we get,
\begin{equation}
\label{eq:step4} \mbox{L.H.S. of (\ref{eq:cube})} \le
\prod_{e_1,\ldots,e_k \in E(Q_{n-1})} \left( \prod_{e'_1,\ldots,e'_k
\in E(Q_2)}\left(\sum R_{\rho_{e'_1} \circ \psi_{e_1} \circ \ldots
\circ \rho_{e'_k} \circ \psi_{e_k}
}\right)^{{4^{-k}|E(Q_{n-1})|}^{-k}}\right).
\end{equation}

Let us first assume that $\|w_e\|_\infty, \|f_u\|_\infty,
\|g_v\|_\infty <C$ for some constant $C>0$. We shall deal with the
general case later. Note first that for every arbitrary $\phi:V(Q_n)
\rightarrow V(Q_n)$, we have $\sum R_\phi < L$ for some large number
$L$ which depends on $C$, $f_u$'s, $g_v$'s, and $w_e$'s but does not
depend on $\phi$. Notice that for $e=(a,b) \in E(Q_{n-1})$
\begin{equation}
\label{eq:colapse1} \rho_{(00,01)} \circ \psi_e =\phi_{(0a,0b)},
\end{equation}
and
\begin{equation}
\label{eq:colapse2} \rho_{(11,10)} \circ \psi_e=\phi_{(1a,1b)},
\end{equation}
where $\phi_{(0a,0b)}$ and $\phi_{(1a,1b)}$ are defined as in
Remark~\ref{rem:homs}.

Next note that for every $\tilde{e} \in E(Q_n)$, $e \in E(Q_{n-1})$,
and $e' \in E(Q_2)$, we have $\rho_{e'} \circ \psi_e \circ
\phi_{\tilde{e}}=\phi_{\tilde{e}}$. Then from (\ref{eq:colapse1})
and (\ref{eq:colapse2}) we can conclude that whenever there exists
$1 \le i \le k$ such that $e'_i \in \{(00,01),(11,10)\}$, then
$\rho_{e'_1} \circ \psi_{e_1} \circ \ldots \circ \rho_{e'_k} \circ
\psi_{e_k}=\phi_e$ for some $e \in E(Q_n)$. Thus from
(\ref{eq:step4}), there exists numbers $p_e \ge 0$ such that
$$\sum_{e \in
E(Q_n)} p_e = 1-2^{-k},$$
and
\begin{equation}
\label{eq:step4-final} \mbox{L.H.S. of (\ref{eq:cube})} \le \prod_{e
\in E(Q_n)} \left(\sum R_{\phi_e}\right)^{p_e} L^{2^{-k}}.
\end{equation}
Since $Q_n$ is edge transitive, by applying the bound
(\ref{eq:step4-final}) to different permutations of the edges and
taking the geometric average, we finally conclude that
\begin{equation}
\label{eq:sym} \mbox{L.H.S. of (\ref{eq:cube})} \le L^{2^{-k}}
\prod_{e \in E(Q_n)} \left(\sum
R_{\phi_e}\right)^{(1-2^{-k})/|E(Q_n)|} .
\end{equation}
By tending $k$ to infinity, (\ref{eq:sym}) reduces to
(\ref{eq:cubeReform}).

\noindent {\bf Step 5}: Now consider the general case where
$\|f_u\|_\infty$, $\|g_v\|_\infty$, and $\|w_e\|_\infty$ need not be
bounded. Fix $C>0$ and let $f'_u := \max(f_u,C)$, $g'_v :=
\max(g_v,C)$ and $w'_e:=\max(w_e,C)$. We know that
Claim~\ref{claim:hypercubes} holds for these functions. By tending
$C$ to infinity the dominated convergence theorem implies the claim
for the general case as well.
\end{proof}

\subsection{Proof of Theorem~\ref{thm:criterion} \label{sec:criterion}}
Let $V(G)=X \cup Y$ be the bipartization of $G$, and denote
$m=|E(G)|$, and $n=|V(G)|$. Consider ${\cal I}=\{1,\ldots,k\}$ for
some $k > 1$.

\noindent {\bf (i):} Let $\lambda \in \mathbb{R}^k$ be such that
$\lambda_i \ge 0$ for $1 \le i \le k$. Define $w \in {\cal
W}^+({\cal I} \times {\cal I})$ as
$$w(x,y)=\left\{
\begin{array}{lcl}
\lambda_x& & \mbox{$x=y$}\\
0& & \mbox{otherwise}
\end{array}
\right.
$$
Note that $\|1\|_{r(G)}=k^{n/m}$, and $\|w\|_{r(G)}=\|\lambda\|_m$.
Now let $H$ be a subgraph of $G$ with $m'$ edges and $n'$ vertices,
and define $w_e=w$, if $e \in E(H)$, and $w_e=1$ otherwise. Then by
Theorem~\ref{thm:Holder},
$$k^{n-n'}\|\lambda\|_{m'}^{m'} = \sum \prod_{e \in E(G)} w_e  \le \|1\|_{r(G)}^{m-m'}\|\lambda\|_m^{m'}=
k^{n(m-m')/m}\|\lambda\|_m^{m'},$$
and so $$\|\lambda\|_{m'} k^{\frac{n}{m}-\frac{n'}{m'}}\le
\|\lambda\|_m.$$
 Since this holds for every $\lambda$, we have
$\frac{n}{m}-\frac{n'}{m'} \le \frac{1}{m}-\frac{1}{m'}$, or
$\frac{m'}{n'-1} \le \frac{m}{n-1}$.

\noindent {\bf (ii):} Let $w \in {\cal W}^+$ be defined as
$$w(x,y)=\left\{
\begin{array}{lcl}
1& & \mbox{$x=1$ or $y=1$}\\
0& & \mbox{otherwise}
\end{array}
\right.
$$
Notice that if $\prod_{(u,v) \in E(G)} w(x_u,y_v) \neq 0$ then $\{u:
x_u>1\} \cup \{v: y_v>1\}$ is an independent set. Thus denoting by
$I(G)$ the set of all independent sets of $G$, and by $\alpha(G)$
the size of its largest independent set, we have
$$h_G(w)=\sum_{S \in I(G)} (k-1)^{|S|}=C k^{\alpha(G)}+o(k^{\alpha(G)}),$$
where $C$ is the number of the independent sets of $G$ of size
$\alpha(G)$. Now let $S$ be a largest independent set of $G$, and
let $u \in V(G) \setminus S$. Without loss of generality assume that
$u \in X$. Let $H=(X,Y,E \setminus E(u,S))$, where $E(u,S)=\{(u,v)
\in E(G): v \in S\}$. Define $w_e=w$ if $e \in E(H)$, and $w_e=1$
otherwise. Note that
$$\sum \prod_{e=(u,v) \in E(G)} w_e \ge k (k-1)^{\alpha(G)},$$
and from Theorem~\ref{thm:Holder}, we get
$$k(k-1)^{\alpha(G)} \le k^{n\deg(u)/m} \left(C k^{\alpha(G)}+o(k^{\alpha(G)})\right)^{|E(H)|/m}.$$
Since this holds for every $k>1$, we get
$$k^{\alpha(G)+1} \le k^{n\deg(u)/m} k^{\alpha(G)|E(H)|/m},$$
and we get
\begin{equation}
\label{eq:crit:eq1} m \le (n - \alpha(G))\deg(u).
\end{equation}
 Since this holds for all $(n-\alpha(G))$ vertices that are in
$V(G) \setminus S$, we conclude that $V(G) \setminus S$ is an
independent set and so without loss of generality we may assume that
$S=Y$ and $|Y| \ge |X|$. Moreover (\ref{eq:crit:eq1}) implies that
all vertices in $X=V(G) \setminus S$ have the same degree
$m/(n-|Y|)$.

Next let $w_1 \in {\cal W}^+$ be defined as
$$w_1(x,y)=\left\{
\begin{array}{lcl}
1& & x,y=1 \\
0& & \mbox{otherwise}
\end{array}
\right.
$$
Then $\|w_1\|_G=1$. We showed above that $Y$ is the largest
independent set of $G$. Now consider $v \in Y$ of degree $d$. Let
$w_e=w_1$ for every edge $e$ incident to $v$, and let $w_e=w$ for
the rest of the $m-d$ edges. Then
$$\sum \prod_{e=(u,v) \in E(G)} w_e \ge (k-1)^{|Y|-1}.$$
Hence for every $k$,
$$(k-1)^{|Y|-1} \le
1^d\left(k^{|Y|}+o(k^{|Y|})\right)^{(m-d)/m},$$
which implies that $d|Y| \le m$. This shows that every vertex in $Y$
is of degree $d$.

\subsection{Proof of Theorem~\ref{thm:moduli}} Before determining the
moduli of smoothness and convexity of ${\cal W}_H$ we need the
following well-known technical lemma (see~\cite{Tomczak}).

\begin{lemma}
\label{lem:moduli:techn} Let $p \ge 2$, and $x,y  \in \mathbb{R}$.
Then
\begin{itemize}
\item[{\bf (i)}] We have $$(x+y)^p + (x-y)^p \le 2^{p-1}
(x^p+y^p).$$

\item[{\bf (ii)}] There exists a constant $K_p$ such that if $|y| \le 1$,
then

$$(1+y)^p + (1-y)^p -2 \le K_p |y|^2.$$
\end{itemize}
\end{lemma}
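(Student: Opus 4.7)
My plan is to handle the two parts separately as standard Taylor-type estimates. For part (i), I would first note that the statement must be read with absolute values---so the claim is $|x+y|^p+|x-y|^p\le 2^{p-1}(|x|^p+|y|^p)$---since otherwise $x^p$ need not make sense for $x<0$ and $p\notin\mathbb{Z}$. The proof then routes through the Euclidean norm on $\mathbb{R}^2$ twice. One direction of the power-mean comparison gives $a^p+b^p\le (a^2+b^2)^{p/2}$ for $p\ge 2$, applied to $a=|x+y|$ and $b=|x-y|$; the parallelogram identity collapses $a^2+b^2$ to $2(x^2+y^2)$, yielding $|x+y|^p+|x-y|^p\le 2^{p/2}(x^2+y^2)^{p/2}$. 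The reverse direction of the same comparison gives $(x^2+y^2)^{p/2}\le 2^{p/2-1}(|x|^p+|y|^p)$. Multiplying these produces exactly the factor $2^{p-1}$.

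For part (ii), I would set $\phi(y):=(1+y)^p+(1-y)^p-2$ on $[-1,1]$ and combine symmetry with Taylor's theorem. The evenness of $\phi$ forces $\phi(0)=\phi'(0)=0$, so $\phi$ starts quadratically. Since $p\ge 2$ the exponent $p-2$ is nonnegative, hence $\phi''(y)=p(p-1)\bigl[(1+y)^{p-2}+(1-y)^{p-2}\bigr]$ has no singularity on $[-1,1]$ and is bounded above by $p(p-1)\,2^{p-1}$. Taylor's theorem with remainder then gives $\phi(y)\le \tfrac12 p(p-1)\,2^{p-1}\,y^2$, so I can take $K_p=\tfrac12 p(p-1) 2^{p-1}$. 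A slightly slicker alternative: $\phi(y)/y^2$ extends continuously to $y=0$ with value $p(p-1)$ by l'H\^opital, and is therefore continuous on the compact interval $[-1,1]$, attaining a finite maximum which one simply names $K_p$.

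I do not expect a genuine obstacle in either part; both inequalities are just quantitative records of the fact that $(1\pm t)^p=1\pm pt+O(t^2)$ uniformly for $|t|\le 1$ when $p\ge 2$. The only point worth flagging is the endpoint $y=\pm1$ in (ii), where one must verify that $(1\mp y)^{p-2}$ does not blow up---this is precisely where the hypothesis $p\ge 2$ is used essentially. With these two estimates in hand, they will plug directly into the subsequent Clarkson-style manipulations used to establish Theorem~\ref{thm:moduli}.
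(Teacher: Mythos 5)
Your proof is correct, but note that the paper does not actually prove this lemma at all: it is stated as a ``well-known technical lemma'' with a pointer to~\cite{Tomczak}, so your argument supplies a self-contained proof where the paper only cites the literature. Both halves of your argument check out. In (i), the chain $|x+y|^p+|x-y|^p\le\bigl(|x+y|^2+|x-y|^2\bigr)^{p/2}=2^{p/2}(x^2+y^2)^{p/2}\le 2^{p/2}\cdot 2^{p/2-1}\bigl(|x|^p+|y|^p\bigr)$ is the standard two-step $\ell_2$--$\ell_p$ comparison in $\mathbb{R}^2$ (the first inequality from $s^{p/2}+t^{p/2}\le 1$ when $s+t=1$, the second from convexity of $t\mapsto t^{p/2}$), and the exponents combine to $2^{p-1}$ as claimed; this is the scalar Clarkson-type inequality. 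Your remark about absolute values is the right reading of the statement, and it is harmless for the paper's use of the lemma: there the exponent is the integer $m$ and $x,y$ are norms, so the signed form follows from the absolute-value form since $(x-y)^m\le|x-y|^m$. In (ii), the Taylor argument with $\phi(0)=\phi'(0)=0$ and $\phi''(y)=p(p-1)\bigl[(1+y)^{p-2}+(1-y)^{p-2}\bigr]\le p(p-1)2^{p-1}$ on $[-1,1]$ is valid precisely because $p-2\ge 0$, as you flag, giving $K_p=\tfrac12 p(p-1)2^{p-1}$; the compactness/l'H\^opital variant works equally well. Either version plugs directly into the proof of Theorem~\ref{thm:moduli} as intended.
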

Now we can state the proof of Theorem~\ref{thm:moduli}.

\noindent
\begin{proof}[Theorem~\ref{thm:moduli}]
The inequalities $\delta_{{\cal W}_H} \le \delta_{\ell_m}$ and
$\rho_{\ell_m} \le \rho_{{\cal W}_H}$ follow from the fact that
$\ell_m$ is a subspace of ${\cal W}_H$. The key observation to prove
the theorem is that for $w_1,w_2 \in {\cal W}_H$, we have
\begin{equation}
\label{eq:moduli:key} \|w_1+w_2\|_H^m + \|w_1-w_2\|_H^m \le
(\|w_1\|_H+\|w_2\|_H)^m + (\|w_1\|_H-\|w_2\|_H)^m.
\end{equation}
To prove (\ref{eq:moduli:key}) expand the left-hand side. Some terms
will be canceled, and then use the fact that $H$ is H\"{o}lder to
bound each of the remaining terms. From (\ref{eq:moduli:key}) and
Lemma~\ref{lem:moduli:techn} {\bf (i)} we get
\begin{equation}
\label{eq:mud:conv1} \|w_1+w_2\|_H^m + \|w_1-w_2\|_H^m \le
2^{m-1}(\|w_1\|_H^m+\|w_2\|_H^m).
\end{equation}
Suppose that $\|w_1\|_H=\|w_2\|_H=1$ and $\|w_1-w_2\|_H \le
\epsilon$. Then by (\ref{eq:mud:conv1}) we have
$$\left\|w_1+w_2\right\|_H^m \le 2^m-\epsilon^m,$$
or
$$\left\|\frac{w_1+w_2}{2}\right\|_H \le \left(1-\left(\frac{\epsilon}{2}\right)^m\right)^{1/m}.$$
This shows that $\delta_{{\cal W}_H}(\epsilon) \ge 1 -
\left(1-\left(\frac{\epsilon}{2}\right)^m\right)^{1/m}$, and
finishes the proof of (\ref{eq:thm:convexity}) because it is known
(see \cite{MR540367}) that $\delta_{\ell_m}(\epsilon) \ge \epsilon^m
/m2^m+o(\epsilon^m)$.

Combining (\ref{eq:moduli:key}) and Lemma~\ref{lem:moduli:techn}
{\bf (ii)} we get that for $\|w_2\|_H \le \|w_1\|_H=1$,
\begin{equation}
\label{eq:moduli:smoothness1} \|w_1+w_2\|_H^m + \|w_1-w_2\|_H^m \le
K_m \|w_2\|_H^2 + 2.
\end{equation}
Next note that for $a\ge 0$ and $p \ge 1$ we have $a-1 \le
(a^p-1)/p$. From this we get
\begin{equation}
\label{eq:moduli:smoothness2} \|w_1+w_2\|_H + \|w_1-w_2\|_H -2 \le
m^{-1}(\|w_1+w_2\|_H^m + \|w_1-w_2\|_H^m - 2).
\end{equation}
Combining (\ref{eq:moduli:smoothness1}) and
(\ref{eq:moduli:smoothness2}) we have
\begin{equation}
\label{eq:moduli:smoothnessMain} \|w_1+w_2\|_H + \|w_1-w_2\|_H -2
\le m^{-1}K_m \|w_2\|_H^2.
\end{equation}
Thus for $0<\epsilon \le 1$, we have $\rho_{{\cal W}_H}(\epsilon)
\le m^{-1} K_m \epsilon^2$. This completes the proof of
(\ref{eq:thm:smoothness}) because it is known (see \cite{MR540367})
that $\rho_{\ell_m}(\epsilon) = (m-1) \epsilon^2/8 + o(\epsilon^2)$.
\end{proof}

\section{Concluding remarks and open questions}

\begin{itemize}
\item It is possible to generalize the framework of this article to hypergraphs
and define $\|\cdot\|_H$ norms when $H$ is a $k$-partite $k$-uniform
hypergraph. These norms will be defined on the space of the
functions $f:{\cal I}_1 \times \ldots \times {\cal I}_k \rightarrow
\mathbb{R}$. When $H$ is the complete $k$-partite $k$-uniform
hypergraph and every part is of size exactly $2$, we get the $k$-th
Gowers norm. Again one can ask that for which hypergraphs the
function is a norm.

\item Is there any edge transitive bipartite graph that is not
weakly H\"{o}lder?

\item In Theorem~\ref{thm:Hypercubes} {\bf (iii)} we showed that hypercubes are weakly H\"{o}lder.
We do not know the situation for any other graph that is of the form
of the Cartesian products of even cycles and single edges. As the
smallest case we suggest determining whether $K_2 \times C_6$ is
weakly H\"{o}lder or not.

\item Prove or disprove that  hypercubes are H\"{o}lder.

\item Consider the graph $H$ that is obtained by removing the edges of a Hamiltonian cycle from $K_{5,5}$. This is the
smallest graph for which Sidorenko's conjecture is
open~\cite{MR1225933}. Is this graph weakly H\"{o}lder? By
Theorem~\ref{thm:strongSid}, a positive answer verifies Sidorenko's
conjecture for this graph.

\item Determine the moduli of smoothness and convexity of $r(H)$
when $r(H)$ is a norm.
\end{itemize}

\section*{Acknowledgements}
The author wishes to thank Bal{\'a}zs Szegedy for introducing the
problem and pointing out the relation to Sidorenko's conjecture, and
also for many valuable discussions that made this work possible.
Also many thanks to L{\'a}szl{\'o} Lov{\'a}sz and Benny Sudakov for
short but valuable discussions.

\bibliographystyle{plain}
\bibliography{norm}
\end{document}